\documentclass[a4paper,11pt]{amsart}

\usepackage[utf8]{inputenc}
\usepackage[english]{babel}
\usepackage{mathtools,amsthm,amsmath,amssymb,amsfonts,amscd,tabu}
\usepackage[numbers]{natbib}
\usepackage{mathrsfs}

\usepackage{graphicx}
\usepackage{layaureo}
\usepackage{color}

\usepackage{enumerate}
\usepackage{hyperref}

\usepackage[new]{old-arrows}
\usepackage{rotating}

%%% FIGURE PACKAGE %%%
%\usepackage{subcaption}
%\captionsetup[subfigure]{labelfont=rm}
\usepackage[small,nohug,heads=vee]{diagrams}
\diagramstyle[labelstyle=\scriptstyle]
\usepackage[all,cmtip]{xy}

\usepackage{tikz-cd}

\frenchspacing
\raggedbottom

\theoremstyle{plain}
\newtheorem{thm}{Theorem}[section]
\newtheorem{theorem}{Theorem}[section]
\newtheorem*{thm*}{Theorem}
\newtheorem{proposition}[thm]{Proposition}
\newtheorem{lemma}[thm]{Lemma}
\newtheorem{corollary}[thm]{Corollary}

\theoremstyle{definition}

\theoremstyle{definition}

\newtheorem{remark}{Remark}
\theoremstyle{definition}
\newtheorem{example}{Example}

\makeatletter
\newsavebox\myboxA
\newsavebox\myboxB
\newlength\mylenA

%%% NEW OVERLINE %%%

%\newcommand*\xoverline[2][0.75]{%
%    \sbox{\myboxA}{$\m@th#2$}%
%    \setbox\myboxB\null% Phantom box
%    \ht\myboxB=\ht\myboxA%
%    \dp\myboxB=\dp\myboxA%
%    \wd\myboxB=#1\wd\myboxA% Scale phantom
%    \sbox\myboxB{$\m@th\bar{\copy\myboxB}$}%  Overlined phantom
%    \setlength\mylenA{\the\wd\myboxA}%   calc width diff
%    \addtolength\mylenA{-\the\wd\myboxB}%
%    \ifdim\wd\myboxB<\wd\myboxA%
%       \rlap{\hskip 0.5\mylenA\usebox\myboxB}{\usebox\myboxA}%
%    \else
%        \hskip -0.5\mylenA\rlap{\usebox\myboxA}{\hskip 0.5\mylenA\usebox\myboxB}%
%    \fi}
%\makeatother

\newcommand{\N}{\mathbb N}
\newcommand{\Z}{\mathbb Z}
\newcommand{\R}{\mathbb R}
\newcommand{\C}{\mathbb C}

%%% CATEGORIES NAME %%%

\newcommand{\nospaceperiod}{\makebox[0pt][l]{\,.}}
\newcommand{\nospacecomma}{\makebox[0pt][l]{\,,}}

%%% REFERRING TO AN EQUATION %%%
%\newcommand{\refeq}[1]{{(\ref{#1})}}

\DeclareMathOperator{\Spec}{Spec}

\DeclareMathOperator{\Frac}{Frac}
\DeclareMathOperator{\id}{id}

\DeclareMathOperator{\Gr}{Gr}

\DeclareMathOperator{\Res}{Res}

\DeclareMathOperator{\Ker}{Ker}
\DeclareMathOperator{\Coker}{Coker}
\DeclareMathOperator{\IIm}{Im}

\DeclareMathOperator{\HH}{H}
\DeclareMathOperator{\HHH}{\mathbb{H}}

\DeclareMathOperator{\Tot}{Tot}

\DeclareMathOperator{\Cone}{Cone}

\title{An Algebraic Description of the Monodromy of Log Curves}
\author{Pietro Gatti}
\address{KU Leuven, 
Celestijnenlaan 200B, B-3001 Leuven, Belgium}
\address{University of Padua, Torre Archimede, Via Trieste 63, 35121 Padova, Italy}
\email{pietro.gatti@kuleuven.be}

\subjclass[2010]{16F40; 16D05}
\keywords{log curves; monodromy; semistable degenerations; invariant cycles}
\begin{document}
\maketitle

\begin{abstract}
Let \(k\) be an algebraically closed field of characteristic \(0\). For a log curve \(X/k^{\times}\) over the standard log point (\cite{Kato00}), we define (algebraically) a combinatorial monodromy operator on its log-de Rham cohomology group. The invariant part of this action has a cohomological description, it is the Du Bois cohomology of \(X\) (\cite{DuBois81}). This can be seen as an analogue of the invariant cycles exact sequence for a semistable family (as in the complex, \'etale and \(p\)-adic settings). In the specific case in which \(k=\C\) and \(X\) is the central fiber of a semistable degeneration over the complex disc, our construction recovers the topological monodromy and the classical local invariant cycles theorem. In particular, our description allows an explicit computation of the monodromy operator in this setting.\\

\end{abstract}

\section{Introduction}\label{intro}
Let \(S/\C\) be a smooth connected affine curve. Suppose that \(\pi:\mathfrak X\to S\) is a proper, separated morphism of finite type from a smooth scheme \(\mathfrak X/\C\). We will denote with \(\mathfrak X_s\) the fiber of \(\pi\) above any closed point \(s\in S\). We assume that \(\mathfrak X_s\) is smooth for \(s\in S'=S\setminus \{P\}\) so that, taking analytifications, the restriction of \(\pi^{an}\) to \((S')^{an}\) is a locally trivial \(C^\infty\)-fibration. In particular, if one takes a small punctured disk \(\Delta^*\subset S'\) centered at \(P\) and a point \(s\in \Delta^*\), there is an action of the fundamental group \(\pi_1(\Delta^*;s)\) on the singular cohomology groups \(\HH^i(\mathfrak X_s^{an};\C)\), for any integer \(i\). The monodromy operator \(T\) is the endomorphism induced on \(\HH^i(\mathfrak X_s^{an};\C)\) by the positive generator of \(\pi_1(\Delta^*;s)\simeq\Z\). The Local Monodromy Theorem asserts that \(T\) is quasi-unipotent. More precisely, if one assumes that the special fiber \(X=\mathfrak X_P\) is a simple normal crossing divisor, then \(T\) is unipotent. From now one we will consider this case and put \(N=-1/(2\pi i) \log(T)\) for the nilpotent version of the monodromy. For any integer \(i\), one has an invariant cycles exact sequence (\cite[§1.7]{DeCataldo09}, \cite[Corollaire 6.2.8]{Beilinson82})
\begin{equation}\label{invcycesintro}
\HH^i(X^{an};\C)\overset{sp}{\longrightarrow}\HH^i(\mathfrak X^{an}_s;\C)\overset{N}{\longrightarrow}\HH^i(\mathfrak X^{an}_s;\C),
\end{equation}
where \(sp\) comes from the deformation retract of \(\mathfrak X^{an}\) onto \(X^{an}\). Moreover we have injectivity of \(sp\) for \(i=1\).\\
In \cite{Steenbrink76}, the monodromy has been linked to the ``limit Hodge structure'', interpreting the cohomology of the generic fiber via a complex of logarithmic differentials \(\Omega_{\mathfrak X/S}^\bullet(\log X)\). In particular, the hypercohomology of \(\Omega_{\mathfrak X/S}^\bullet(\log X)\otimes \mathcal O_X\), the pull back to the special fiber, coincides with \(\HH^i(\mathfrak X^{an}_s;\C)\). If we put on \(X\) the log structure induced by the embedding \(X\hookrightarrow \mathfrak X\), we obtain a log smooth scheme over the standard log point \(\C^{\times}\) (we refer to \cite{Kato89} for the definitions). Its relative log-de Rham complex \(\omega^\bullet_{X/\C^{\times}}\) is isomorphic to the complex \(\Omega_{\mathfrak X/S}^\bullet(\log X)\otimes \mathcal O_X\) and therefore computes the cohomology of the generic fiber:
\begin{equation}\label{compintro}
\HH^i(\mathfrak X^{an}_s;\C)\overset{\sim}{\longrightarrow}\HHH^i(X,\omega^\bullet_{X/\C^{\times}}).
\end{equation}
In this paper we will consider the case of a semistable family of curves, that is \(\dim X=1\). We compute explicitly the cohomology of \(\omega^\bullet_{X/\C^{\times}}\) in terms of the irreducible components of \(X\) and the combinatorial data of their intersections, without any reference to the family. This allows us to construct a combinatorial monodromy \(\tilde{N}\) acting on \(\HHH^1(X,\omega^\bullet_{X/\C^{\times}})\), obtaining a new description for the topological monodromy.
\begin{theorem}\label{th1}
The combinatorial monodromy \(\tilde{N}\) and the topological monodromy \(N\), described above, coincide via the isomorphism in (\ref{compintro}).
\end{theorem}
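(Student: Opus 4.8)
The plan is to reduce the comparison to a single algebraic statement: that the topological monodromy logarithm \(N\), transported to \(\HHH^1(X,\omega^\bullet_{X/\C^{\times}})\) via (\ref{compintro}), is the residue operator attached to the parameter \(d\log t\) of the base log point, and then to check that this residue operator is exactly the combinatorial \(\tilde N\) constructed above. The bridge between the analytic and the algebraic side is the short exact sequence of log-de Rham complexes coming from the relative log-cotangent sequence of \(X/\C^{\times}\to\C^{\times}/\C\),
\begin{equation}\label{absrel}
0\longrightarrow \omega^\bullet_{X/\C^{\times}}[-1]\xrightarrow{\,\wedge\, d\log t\,}\omega^\bullet_{X/\C}\longrightarrow \omega^\bullet_{X/\C^{\times}}\longrightarrow 0,
\end{equation}
where \(\omega^\bullet_{X/\C}\) is the absolute log-de Rham complex and the subcomplex is the image of wedging with the generator \(d\log t\) of the log differentials of the base. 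Its connecting homomorphism \(\partial\colon \HHH^i(X,\omega^\bullet_{X/\C^{\times}})\to\HHH^i(X,\omega^\bullet_{X/\C^{\times}})\) is the residue of the Gauss--Manin connection along \(d\log t\).

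First I would recall, citing Steenbrink's comparison, that under (\ref{compintro}) the nilpotent operator \(N\) coincides with \(\partial\) up to the normalization already built into \(N=-\tfrac{1}{2\pi i}\log T\); this is the de Rham incarnation of the fact that the monodromy logarithm is the residue of the Gauss--Manin connection. Since (\ref{compintro}) is induced by a comparison of complexes compatible with the connection, it intertwines the analytic \(N\) with the algebraic \(\partial\), so it suffices to prove \(\tilde N=\partial\) as endomorphisms of \(\HHH^1(X,\omega^\bullet_{X/\C^{\times}})\).

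Next I would compute \(\partial\) on an explicit model in the curve case. Choosing an affine cover of \(X\) adapted to the irreducible components and to the nodes, one has locally at a node \(\{xy=0\}\) the relation \(d\log x+d\log y=d\log t\), so a local lift to \(\omega^\bullet_{X/\C}\) of a relative log \(1\)-form differs from a closed absolute form precisely by a multiple of \(d\log t\) whose coefficient is the residue of the form at the node. Hence the \v Cech representative of \(\partial\) is assembled from the residues at the nodes weighted by the incidence data of the dual graph. Comparing this with the explicit formula defining \(\tilde N\)---which is built from the same node-residue maps and the same incidence relations---yields the equality \(\tilde N=\partial\) term by term, and with it the theorem.

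The main obstacle is twofold. First, one must pin down all sign and \(2\pi i\) conventions so that the residue \(\partial\) matches \(N\) and not \(-N\) or \(2\pi i\,N\); this requires tracking the comparison (\ref{compintro}) through Steenbrink's construction and the attendant GAGA and base-change identifications with some care. Second, lifting the local node computation to a global statement on hypercohomology demands a choice of resolutions for (\ref{absrel}) that simultaneously computes \(\HHH^1(X,\omega^\bullet_{X/\C^{\times}})\) in the combinatorial form used to define \(\tilde N\); reconciling the double complex underlying the connecting map with the one underlying the combinatorial description is where the genuine bookkeeping lies.
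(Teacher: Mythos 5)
Your proposal follows essentially the same route as the paper: identify \(N\) with the connecting map of the absolute-versus-relative log-de Rham sequence (the residue of the Gauss--Manin connection, via Steenbrink's comparison and the local freeness of \(\R^1\pi_*\omega^{\bullet}_{\mathfrak X^{\times}/S^{\times}}\)), then compute that connecting map on a combinatorial resolution adapted to the components and nodes, where the local relation \(d\log x+d\log y=d\log t\) makes the lift differ by the residue at each node, and match the result with \(\tilde N\). The bookkeeping you flag at the end---reconciling the resolution computing the connecting map with the one defining \(\tilde N\)---is exactly what the paper's explicit cone/roof argument with the index-wise section \(\sigma\) carries out.
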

As we said, we define a monodromy  \(\tilde{N}\) in a slightly more general setting than that of a semistable family of curves over the complex disc. This extension goes in two directions. Firstly, our method is completely algebraic, so we work over an algebraically closed field of characteristic \(0\), indicated with \(k\). This is not new: it has been known for a long time that the monodromy is related to an algebraic differential equation, the Gauss-Manin conncetion. Secondly, our construction does not involve the ``surrounding'' of \(X\), i.e. we do not suppose that \(X\) is the special fiber of a family. More precisely, we construct \(\tilde{N}\) for a log curve \(X\) in the sense of \cite{Kato00} (see the beginning of §\ref{logcurve}). This generalizes the case of a semistable degeneration of curves. For example, we have monodromy for a logarithmic embedding (cf. \cite{Kato96}). In particular, for a semistable degeneration over the disc, this shows that the monodromy is endoced in the central fiber when considered as a log scheme. \\
The ideas used in this paper are fundamentally inspired by the \(p\)-adic analogue of this theory: the study of the Hyodo-Kato monodromy (cf. \cite{Hyodo94,Coleman99,Coleman10}) for a semistable curve over a discrete valuation ring of mixed characteristic. Specifically, we reproduce the description of the Hyodo-Kato monodromy presented in \cite{Coleman99} in terms of residues and the dual graph of \(X\), where the log structure replaces the use of rigid analytic tubes. In addition, in \cite{Chiarellotto99} and \cite{Chiarellotto16}, \(p\)-adic invariant cycles exact sequences have been introduced, reinforcing the idea that the invariant part of the monodromy of a family should have a cohomological description in terms of the special fiber. In this direction, we have the following result, whose proof is essentially a combinatorial argument in the spirit of \cite{Chiarellotto16}.
\begin{theorem}\label{th2}
Let \(X/k^{\times}\) be a proper log curve over the standard log point. We assume that \(X\) has no marked points and that its irreducible components are smooth (cf. \cite{Kato00}). Then we have a short exact sequence
\[
0\longrightarrow \HH^1_{DB}(X/k)\longrightarrow\HHH^1(X,\omega^\bullet_{X/k^{\times}})\overset{\tilde{N}}{\longrightarrow}\HHH^1(X,\omega^\bullet_{X/k^{\times}}),
\]
where \(\HH^1_{DB}(X/k)=\HHH^1(X,\tilde{\Omega}_{X/k}^\bullet)\) denotes the Du Bois cohomology of \(X\), as in \cite[p.69]{DuBois81}.
\end{theorem}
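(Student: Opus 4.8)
The plan is to realize both groups through explicit two-term complexes and to connect them by the residue map at the nodes. Let $\nu\colon\tilde X\to X$ be the normalization and $\Sigma\subset X$ the finite set of double points, which by hypothesis are the only non-smooth points of $X$. Over the standard log point the sheaf of relative log differentials $\omega^1_{X/k^{\times}}$ is the dualizing sheaf $\omega_X$, so that $\omega^\bullet_{X/k^{\times}}=[\mathcal O_X\xrightarrow{d}\omega_X]$. Since the nodes are normal crossing, hence Du Bois, singularities, the Du Bois complex is quasi-isomorphic to $[\mathcal O_X\xrightarrow{d}\nu_*\Omega^1_{\tilde X}]$, whose degree-one term carries only the differentials regular on each branch. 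The inclusion $\nu_*\Omega^1_{\tilde X}\hookrightarrow\omega_X$ as the forms of vanishing residue, together with the identity in degree zero, gives a morphism of complexes $\tilde{\Omega}^\bullet_{X/k}\to\omega^\bullet_{X/k^{\times}}$.

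First I would extract the residue short exact sequence of complexes
\[
0\longrightarrow\tilde{\Omega}^\bullet_{X/k}\longrightarrow\omega^\bullet_{X/k^{\times}}\longrightarrow\mathcal R[-1]\longrightarrow 0,
\]
where $\mathcal R=\omega_X/\nu_*\Omega^1_{\tilde X}\cong\bigoplus_{x\in\Sigma}k$ is the residue sheaf, supported on $\Sigma$ with one-dimensional stalks. As $\mathcal R$ is concentrated in dimension zero, $\HHH^i(X,\mathcal R[-1])$ vanishes except for $i=1$, where it equals $\bigoplus_{x\in\Sigma}k$. The hypercohomology long exact sequence then reads
\[
0\longrightarrow\HH^1_{DB}(X/k)\longrightarrow\HHH^1(X,\omega^\bullet_{X/k^{\times}})\overset{\operatorname{res}}{\longrightarrow}\bigoplus_{x\in\Sigma}k\overset{\delta}{\longrightarrow}\HH^2_{DB}(X/k),
\]
so $\HH^1_{DB}(X/k)\to\HHH^1(X,\omega^\bullet_{X/k^{\times}})$ is injective with image exactly $\ker(\operatorname{res})$. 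This is the left half of the asserted sequence, and it reduces the theorem to the identity $\ker(\operatorname{res})=\ker(\tilde N)$.

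The core step is to match $\operatorname{res}$ with the combinatorial operator $\tilde N$. Identifying $\bigoplus_{x\in\Sigma}k$ with the edge-chains $C_1(\Gamma)$ of the dual graph $\Gamma$ (vertices $=$ components, edges $=$ nodes) and $\HH^2_{DB}(X/k)$ with the vertex-chains $C_0(\Gamma)$, I expect the connecting map $\delta$ to be the graph boundary $\partial\colon C_1(\Gamma)\to C_0(\Gamma)$, whence $\operatorname{im}(\operatorname{res})=\ker\partial=\HH_1(\Gamma)$; the residue theorem on each component $X_i$, namely $\sum_{x\in\Sigma\cap X_i}\operatorname{res}_x=0$, provides the inclusion $\operatorname{im}(\operatorname{res})\subseteq\ker\partial$ directly. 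Using the explicit description of $\tilde N$ constructed above, I would then show that $\tilde N$ factors as $\operatorname{res}$ followed by the canonical isomorphism $\HH_1(\Gamma)\xrightarrow{\sim}\HH^1(\Gamma)$ onto the subspace $\HH^1(\Gamma)\subset\HHH^1(X,\omega^\bullet_{X/k^{\times}})$ spanned by the combinatorial cycle classes. Since this reconstruction arrow is injective, $\ker(\tilde N)=\ker(\operatorname{res})$, completing the sequence; as a numerical cross-check, degeneration of the Hodge--de Rham spectral sequence for the Du Bois complex gives $\dim_k\HH^1_{DB}(X/k)=2\sum_i g_i+b_1(\Gamma)$, matching $\dim_k\HHH^1(X,\omega^\bullet_{X/k^{\times}})-b_1(\Gamma)$.

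I expect the one genuinely nontrivial step to be this last comparison: translating the intrinsic residue map furnished by the long exact sequence into the combinatorially defined $\tilde N$, and in particular verifying that the reconstruction $\HH_1(\Gamma)\hookrightarrow\HHH^1(X,\omega^\bullet_{X/k^{\times}})$ is injective with image $\operatorname{im}(\tilde N)=\HH^1(\Gamma)$. Everything else is the formal homological algebra of the residue sequence together with the dual-graph bookkeeping in the style of \cite{Chiarellotto16}; the construction of the short exact sequence of complexes and the vanishing of $\HHH^0(X,\mathcal R[-1])$ are precisely what make the injectivity of the first map automatic.
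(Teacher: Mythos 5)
Your outline is correct and reaches the theorem by a genuinely different homological packaging than the paper's. You encode everything in a single short exact sequence of complexes \(0\to\underline{\Omega}^\bullet_{X/k}\to\omega^\bullet_{X/k^\times}\to\mathcal R[-1]\to 0\), with \(\mathcal R\) the skyscraper of residues at the nodes, and read off both the injectivity of \(\HH^1_{DB}(X/k)\to\HH^1_{\log}(X/k^\times)\) (from \(\HHH^0(X,\mathcal R[-1])=0\)) and the identification of its image with \(\Ker(\mathrm{res})\) from the long exact sequence; the paper instead runs the two spectral sequences of the double complexes \(\underline{\mathcal A}_{X/k}\) and \(\mathcal A_{X/k^\times}\), compares them in a three-by-three diagram, and reduces to the exactness of one column (Lemma \ref{redinvcyc}). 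Your route is more economical on the formal side and isolates the geometric content in the residue sheaf (your identification of \(\omega^1_{X/k^\times}\) with the dualizing sheaf and of \(\underline{\Omega}^1_{X/k}\) with the forms of vanishing residue is correct, and your dimension cross-check is consistent). Be aware, however, that the two steps you defer are exactly where the paper spends its effort, and they are the same two inputs. First, the identification of the connecting map of your sequence with \([\omega]\mapsto(\Res_{X_e}(\omega_v))_e\) in the global-section model of Lemma \ref{expl} requires the local computation that \(\omega^1_{X/k^\times}\) is generated at a node by \(d\log x=-d\log y\) and that the induced class in \(\mathcal R\) of a form regular on \(U_v\) is its residue. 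Second, the injectivity of the reconstruction map \((a_e)_e\mapsto[(0,(a_e)_e)]\) on the cycle space of the dual graph is not a formality: \([(0,(a_e)_e)]=0\) exactly when \((a_e)_e\) is a coboundary \((a_v-a_w)_{e=[v,w]}\), so what you need is that coboundaries meet cycles only in \(0\); this is precisely Lemma \ref{comblemma} (i.e.\ \cite[Proposition 8]{Chiarellotto16}), which uses characteristic \(0\) and that the graph has no loops (guaranteed here by the smoothness of the components). The residue theorem on each proper component, which you invoke to place \(\IIm(\mathrm{res})\) inside the cycle space, plays the identical role in the paper's proof (equation (\ref{resthm})). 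In short: same combinatorial core, differently organized; to make your argument complete you must supply the first identification and cite or reprove the combinatorial lemma rather than treat it as bookkeeping.
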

When \(k=\C\) and \(X\) comes from a family, this result recovers the classical invariant cycles sequence (\ref{invcycesintro}), indeed there is an isomorphism \(\HH^1_{DB}(X/\C)\simeq \HH^1(X;\C)\), as shown in \cite[Th\'eor\`eme 4.5]{DuBois81}.\\
The content of this paper is structured as follows. After setting some conventions in §\ref{conv}, in §\ref{logcurve} we explain what are the properties of the log curve \(X/k^{\times}\) we study. We will obtain an explicit computation of the log differentials \(\omega^1_{X/k^{\times}}\) and those of the irreducible components of \(X\). In §\ref{Laurent} we study an algebraic definition of residues for curves in the \'etale setting. In §\ref{res} we construct a double complex \(\mathcal A_{X/k^{\times}}\) from the components of \(X\) and its dual graph. We show that this complex computes the hypercohomology \(\HHH^1(X,\omega^{\bullet}_{X/k^{\times}})\). This allows us to represent cohomology classes of \(\HHH^1(X,\omega^{\bullet}_{X/k^{\times}})\) in terms of global sections. In §\ref{mon}, this concrete description will give us the possibility to define the combinatorial monodromy \(\tilde{N}\). Then we show that the kernel of \(\tilde{N}\) is given by the Du Bois cohomology of \(X\), proving Theorem \ref{th2}. In §\ref{mongm} we review the theory in the complex setting. We recall the construction of the Gauss-Manin connection and collect the classical results that allow us to compute the monodromy algebraically on the special fiber. Section §\ref{comp} is devoted to the proof of Theorem \ref{th1}.\\\\
We plan to extend this framework to the situation of de Rham cohomology with coefficients i.e. integrable connections. Inspired by the similarities between our combinatorial description of the monodromy and the \(p\)-adic one appearing in \cite{Coleman10} and \cite{Chiarellotto16}, we expect to obtain results similar to those established in the arithmetic setting. In particular, a sufficient condition for the non-exactness of (\ref{invcycesintro}) when we have unipotent coefficients (cf. \cite[Theorem 10]{Chiarellotto16}). Since unipotent connections are not semi-simple, this goes in a different direction then the one developed in the classical and \(\ell\)-adic settings in \cite{Beilinson82}.

\section*{acknowledgements}
The author would like to express his gratitude to Bruno Chiarellotto for the valuable comments, corrections and advices on this paper. Thanks are also due to Nicola Mazzari for the helpful discussions concerning this research. The author was partially funded by Nero Budur's research project G0B2115N from the Research Foundation of Flanders.

\section{Notation and conventions}\label{conv}
Unless otherwise stated, all sheaves on a scheme are considered as sheaves on the small \'etale site. For a log scheme \(X\), we denote by \(X^\circ\), \(\mathcal O_X\) and \(\alpha_X:\mathcal M_X\to\mathcal O_X\) the underlying scheme, the structure sheaf and the log structure on \(X\), respectively. The \(i\)-th cohomology of a sheaf \(\mathcal F\) (resp. the \(i\)-th hypercohomology of a complex of sheaves \(\mathcal F^\bullet\)) on \(X^\circ\), will be indicated with \(\HH^i(X,\mathcal F)\) (resp. with \(\HHH^i(X,\mathcal F^{\bullet})\)). For a morphism of log schemes \(f:X\to Y\), we denote by \(f^\circ:X^\circ \to Y^\circ\) the underlying morphism of schemes. We denote its sheaf of relative log differentials with \(\omega^1_{X/Y}\), the associated de Rham complex with \(\omega^\bullet_{X/Y}\) and we put \(\HH^i_{\log}(X/Y):=\HHH^i(X,\omega^\bullet_{X/Y})\). Since no confusion will arise, we will use the notation \(\Omega^\bullet_{X/Y}\) instead of the more precise \(\Omega^\bullet_{X^\circ/Y^\circ}\) for the scheme-theoretic algebraic de Rham complex. The usual de Rham cohomology groups will be \(\HH^i_{dR}(X/Y):=\HHH^i(X,\Omega^{\bullet}_{X/Y})\).\\
Given a complex \(C^{\bullet}\) with differential \(d^i:C^i\to C^{i+1}\) in position \(i\) and an integer \(k\), we will denote with \(C^{\bullet+k}\) the complex with differential \(d^{i+k}:C^{i+k}\to C^{i+k+1}\) in position \(i\) and with \(C^{\bullet}[k]\) the complex with differential \((-1)^kd^{i+k}:C^{i+k}\to C^{i+k+1}\) in position \(i\).

\section{Log-curve and its components}\label{logcurve}
Let \(k\) be an algebraically closed field of characteristic \(0\) and let \(k^\times=(\Spec k,\N\oplus k^*)\) be the standard log point. We consider a proper log curve \(X\) over \(k^\times\) in the sense of F.~Kato (\cite{Kato00}). We add the following assumptions on \(X\):
\begin{enumerate}[(i)]
\item The log structure of \(X\) does not have marked points. In the terminology of \cite{Kato00} this means \(\overline{\mathcal M}_{X,x}\simeq 0\) or \(\Z\), for any point \(x\).
\item The irreducible components of \(X^\circ\) are smooth.
\end{enumerate}
\begin{example} The curve \(X^\circ\) is a simple normal crossing divisor in some smooth variety \(\mathfrak X\), the log structure \(\mathcal M_X\) is the pullback of the divisorial log structure on \(\mathfrak X\) induced by \(X^\circ\) (cf. \cite[§1.5]{Kato89}).
\end{example}
Using \cite[Theorem 1.3]{Kato00} and its proof, we deduce that \(X^{\circ}\) has at most ordinary double points, and we obtain the following description for its log structure. \'Etale locally around a double point we have a factorization
\[
X^\circ\longrightarrow\Spec k[x,y]/(xy)\longrightarrow\Spec k
\]
where the first arrow is \'etale and the morphism \(f:X\rightarrow k^\times\) admits a chart characterised by
\[
\begin{array}{ccc}
\mathbb N^2\oplus_{\Delta, \N, m}\N\longrightarrow \mathcal O_{X}, \quad&\N\longrightarrow k, \quad& \N\longrightarrow \N^2\oplus_{\Delta, \N, m}\N\nospaceperiod \\
((1,0),0)\longmapsto x \quad&1\longmapsto 0 \quad&1\longmapsto ((0,0),1) \\
((0,1),0)\longmapsto y\quad\\
((0,0),1)\longmapsto 0\quad
\end{array}
\]
Here \(\mathbb N^2\oplus_{\Delta, \N, m}\N\) denotes the amalgamated sum of the diagonal \(\Delta:\N\rightarrow \N^2\) and the multiplication by a natural number \(m:\N\to\N\).\\
\'Etale locally around a smooth point we have a factorization
\[
X^\circ\longrightarrow\Spec k[z]\longrightarrow\Spec k,
\]
where the first arrow is \'etale, and \(f\) admits a chart obtained from
\[
\begin{array}{ccc}
\mathbb N\longrightarrow \mathcal O_{X},\quad &\N\longrightarrow k,\quad & \N\longrightarrow \N\nospaceperiod \\
1\longmapsto 0 \quad&1\longmapsto 0 \quad&1\longmapsto 1 \\
\end{array}
\]

We attach to \(X\) its dual graph \(\Gr(X)=(\mathscr V, \mathscr E)\) in the following way. Each vertex \(v\) in \(\mathscr V\) corresponds to an irreducible component \(X_v^\circ\) of \(X^\circ\). To an intersection point \(X_e^\circ\) in \(X_v^\circ\cap X_w^\circ\) we associate an oriented edge \(e=[v,w]\) in \(\mathscr E\). In contrast with the construction of the dual graph in \cite{Coleman99} and \cite{Chiarellotto16}, each point of intersection appears only once in the graph and the corresponding edge has an arbitrary orientation.\\
Let \(e=[v,w]\) be an edge in \(\Gr(X\)), we adopt the following convention for the choice of \'etale coordinates for \(X^{\circ}_v\) and \(X^{\circ}_w\) around \(X^{\circ}_e\): the coordinate \(x\) is on \(X^{\circ}_v\) and the coordinate \(y\) is on \(X^{\circ}_w\). More precisely, we have a factorization
\begin{equation}\label{coordinates}
\begin{tikzcd}
X^{\circ}_e\arrow[r] \arrow[hookrightarrow]{d} &\Spec k \arrow[hookrightarrow]{d}{x\mapsto 0} \arrow[r] &\Spec k \arrow[d,equal]\\
X^{\circ}_v\arrow[r] \arrow[hookrightarrow]{d} &\Spec k[x] \arrow[hookrightarrow]{d} \arrow[r] &\Spec k \arrow[d,equal]\\
X^{\circ}\arrow[r] &\Spec k[x,y]/(xy)\arrow[r] &\Spec k\\
X^{\circ}_w\arrow[r] \arrow[hookrightarrow]{u} &\Spec k[y] \arrow[hookrightarrow]{u} \arrow[r] &\Spec k \arrow[u,equal]\\
X^{\circ}_e\arrow[r] \arrow[hookrightarrow]{u} &\Spec k \arrow[hookrightarrow]{u}{y\mapsto 0} \arrow[r] &\Spec k \arrow[u,equal]\nospacecomma
\end{tikzcd}
\end{equation}
where the horizontal arrows on the left are \'etale. For each \(v\) in \(\mathscr V\) and \(e\) in \(\mathscr E\) we will denote the closed embeddings of components and points of interscetion respectively by \(i^{\circ}_v:X^{\circ}_v\longrightarrow X^{\circ}\) and \(i^{\circ}_e:X^{\circ}_e\longrightarrow X^{\circ}\). We obtain log schemes \(X_v=(X^{\circ}_v,\mathcal M_{X_v})\), where \(\mathcal M_{X_v}\) is the pullback of \(\mathcal M_X\) via \(i^{\circ}_v\) as a log structure. From the charts for \(f:X\to k^\times\), one deduces local charts for \(f_v:X_v\to k^\times\) as follows. We assume \(e=[v,w]\), the case \(e=[w,v]\) being symmetric. \'Etale locally around \(X^{\circ}_e\), a chart for \(f_v\) is deduced from
\[
\begin{array}{ccc}
\mathbb N^2\oplus_{\Delta, \N, m}\N\longrightarrow \mathcal O_{X_v},\quad &\N\longrightarrow k,\quad & \N\longrightarrow \mathbb N^2\oplus_{\Delta, \N, m}\N\nospaceperiod \\
((1,0),0)\longmapsto x \quad&1\longmapsto 0 \quad&1\longmapsto ((0,0),1) \\
((0,1),0)\longmapsto 0\quad\\
((0,0),1)\longmapsto 0\quad
\end{array}
\]
\'Etale locally around a smooth point of \(X^{\circ}_v\), a chart for \(f_v\) is obtained from
\[
\begin{array}{ccc}
\mathbb N\longrightarrow \mathcal O_{X_v}, \quad&\N\longrightarrow k, \quad& \N\longrightarrow \N\nospaceperiod \\
1\longmapsto 0 \quad&1\longmapsto 0 \quad&1\longmapsto 1
\end{array}
\]
Similarly, for each edge \(e\) in \(\mathscr E\) we obtain log points \(X_e=(X^{\circ}_e,\mathcal M_{X_e})\), where \(\mathcal M_{X_e}\) is the pullback of \(\mathcal M_X\) via \(i_e\) as a log structure. A chart for \(X_e\to k^\times\) is given by
\[
\begin{array}{ccc}
\mathbb N^2\oplus_{\Delta, \N, m}\N\longrightarrow \mathcal O_{X_e}=k, \quad&\N\longrightarrow k, \quad& \N\longrightarrow \mathbb N^2\oplus_{\Delta, \N, m}\N \\
((1,0),0)\longmapsto 0 \quad&1\longmapsto 0 \quad&1\longmapsto ((0,0),1)\\
((0,1),0)\longmapsto 0\quad\\
((0,0),1)\longmapsto 0\quad
\end{array}
\]
We will now use the local description of the log structures to compute the sheaves of log differentials for the log schemes we have introduced. 
Around double points
\[
\omega_{X/k^\times}^1\simeq \frac{\mathcal O_X d\log x + \mathcal O_X d\log y}{\mathcal O_X (d\log x+d\log y)},
\]
where \(xd\log x=dx\) and \(yd\log y=dy\). Notice that the multiplication by \(m\), showing up in the charts, disappears. Indeed, the log differential \(d\log 0\), coming from \(((0,0),1)\), satisfies \(md\log 0=d\log x +d\log y\), but \(m\) is invertible in \(k\).\\
Around smooth points
\[
\omega_{X/k^\times}^1\simeq \frac{\mathcal O_X dz + \mathcal O_X d\log 0}{\mathcal O_X d\log 0}\simeq \Omega^1_{X/k}.
\]
Similarly, for an irreducible component, around a double point we have
\[
\omega_{X_v/k^\times}^1\simeq \frac{\mathcal O_{X_v} d\log x + \mathcal O_{X_v} d\log y}{\mathcal O_{X_v} (d\log x+d\log y)}.
\]
Note that only one between \(x\) and \(y\) makes sense as a coordinate on \(\mathcal O_{X_v}\), the other is just a symbol (see diagram (\ref{coordinates})). Around a smooth point of \(X^{\circ}_v\) we have \(\omega_{X_v/k^\times}^1\simeq \omega_{X/k^\times}^1\simeq \Omega^1_{X/k}\). Finally,
\[
\omega_{X_e/k^\times}^1\simeq \frac{kd\log x + k d\log y}{k(d\log x+d\log y)}.
\]

One can see that \(f_v:X_v\to k^\times\) is not log smooth, nonetheless its sheaf of relative differentials \(\omega^1_{X_v/k^\times}\) is locally free.\\
We need to introduce a log scheme \(f'_v:X'_v\to \Spec k\) with \(X'^\circ_v=X_v^\circ\) and \(\omega^1_{X_v/k^\times}\simeq \omega^1_{X'_v/k}\). It is defined as follows. Let
\[
\mathscr E_v:=\{e\in \mathscr E\mid A(e)=v\text{ or }B(e)=v\},
\]
where \(A(e)\) is the starting vertex of \(e\) and \(B(e)\) is the ending vertex of \(e\). Then \(X'_v\) is the log scheme obtained from \(X_v^\circ\) with the log structure induced by the inclusion of the divisor \(\coprod_{e\in \mathscr E_v}X_e\). Let \(e=[v,w]\), the case \(e=[w,v]\) being symmetric, a chart for \(f'_v\) around \(X_e\) is given by
\[
\begin{array}{ccc}
\mathbb N\longrightarrow \mathcal O_{X'_v}, \quad&1\longrightarrow k, \quad& 1\longrightarrow \N\nospaceperiod \\
1\longmapsto x \quad&1\longmapsto 1 \quad&1\longmapsto 0 
\end{array}
\]
Around points that are smooth in \(X^\circ\), a chart for \(f'_v\) is given by the trivial one
\[
\begin{array}{ccc}
1\longrightarrow \mathcal O_{X'_v}, \quad&1\longrightarrow k, \quad& 1\longrightarrow 1\nospaceperiod
\end{array}
\]
A local computation shows that, around a double point
\begin{align*}
\omega_{X_v/k^\times}^1\simeq \frac{\mathcal O_{X_v} d\log x + \mathcal O_{X_v} d\log y}{\mathcal O_{X_v} (d\log x+d\log y)}&\overset{\sim}{\longrightarrow}\omega^1_{X'_v/k}\simeq\mathcal O_{X'_v}d\log x\\
\left[fd\log x + gd\log y\right]&\longmapsto \left(f-g\right)d\log x.
\end{align*}
Around a smooth point \(\omega^1_{X_v/k^\times}\simeq\Omega^1_{X/k}\simeq\omega^1_{X'_v/k}\). The local isomorphisms glue to an isomorphism
\begin{equation}\label{diviso}
\omega_{X_v/k^\times}^1\simeq\omega^1_{X'_v/k}
\end{equation}

\section{Laurent Series and Residues}\label{Laurent}
Following \cite[Ch. II]{Serre88} one can associate (algebraically) Laurent series to regular functions on a smooth curve. This results in an algebraic definition of the residue of a differential form. In this section we adapt those ideas to the \'etale site.\\
Let \(k\) be an algebraically closed field of characteristic \(0\), \(X\) a smooth curve over \(k\) and \(P\) a point of \(X\). If we denote \(i:P\hookrightarrow X\) the inclusion and \(\mathcal O_P\) and \(\mathcal O_X\) the respective structure sheaves, then we have a natural map \({e}_P:\mathcal O_X \to i_*\mathcal O_P\). This morphism is the ``evaluation'' of a function on \(X\) at \(P\). If we consider an open subset \(j:U\hookrightarrow X\), we cannot expect to have a natural morphism \(j_*\mathcal O_U \to i_*\mathcal O_P\) if \(U\) does not contain \(P\). How can one evaluate a function at a point where it is not defined?\\
We will construct a (non-natural) morphism \(\varepsilon_{P}:j_*\mathcal O_U \to i_*\mathcal O_P\) extending the evaluation at \(P\). That is, \(\varepsilon_P\) fits into a commutative diagram
\[
\begin{tikzcd}
\mathcal O_X \arrow[r] \arrow[rd,"e_P"'] &j_*\mathcal O_U \arrow[d,"\varepsilon_{P}"] \\
&i_*\mathcal O_P\nospaceperiod
\end{tikzcd}
\]
The construction of \(\varepsilon_P\) \emph{depends} on the choice of a local parameter for \(\mathcal O_{X,P}\). Nonetheless, for our applications, we are interested in the morphism induced in cohomology. We will see that at this level the ambiguity disappears.\\
Elaborating on the same ideas we will come to the definition of the residue at \(P\) as a morphism of sheaves on the \'etale site
\[
\Res_P:j_*\Omega^1_{U}\longrightarrow i_*\mathcal O_P.
\]
Because it involves differentials, in contrast with the evaluation, the definition of the residue is independent on any choice.\\
Instead, for the definition of \(\varepsilon_P\), we fix a uniformizer \(x_P\) for the discrete valuation ring \(\mathcal O_{X,P}\), where the stalks are taken with respect to the Zariski topology. Let \(h:T\to X\) be an \'etale open. If \(P\not\in h(T)\), we have \(i_*\mathcal O_P(T)=0\), so we put \(\varepsilon_P(T)=0\). We now consider the case in which \(h^{-1}(P)\) is not empty. By smoothness of \(h\) and \(X\), we deduce that the local rings \(\mathcal O_{T,Q}\) are discrete valuation rings, in particular integral, for any point \(Q\) in \(T\). This implies that \(T\) is the disjoint union of integral schemes. So we can assume that \(T\) is integral. Let us now consider a point \(Q\) in \(T\) such that \(h(Q)=P\). We denote with \(\eta\) the generic point of \(T\), then the function field of \(T\) is \(\mathcal O_{T,\eta}\). The natural morphism \(\mathcal O_{T,Q}\to \mathcal O_{T,\eta}\) induces an isomorphism \(\Frac(\mathcal O_{T,Q})\simeq \mathcal O_{T,\eta}\) passing to fraction fields. We then deduce a natural map
\begin{equation}\label{mero}
\begin{tikzcd}
j_*\mathcal O_U(T)=\mathcal O(U\times_X T) \arrow[r,hook] &\Frac(\mathcal O_{T,Q})\nospaceperiod
\end{tikzcd}
\end{equation}
Since \(h\) is unramified, the natural morphism
\[
h^{\#}_{Q}:\mathcal O_{X,P}\longrightarrow \mathcal O_{T,Q}
\]
sends the uniformizer \(x_P\) to a uniformizer that we will denote \(t_Q:=h^{\#}_{Q}(x_P)\). If we denote with \(\Frac(\mathcal O_{T,Q})^{\widehat{\phantom{x}}{^Q}}\) the completion with respect to the valuation induced by \(Q\), the choice of uniformizer induces an isomorphism \(\Frac(\mathcal O_{T,Q})^{\widehat{\phantom{x}}{^Q}}\simeq k((t_Q))\), the Laurent series in \(t_Q\) with coefficients in \(k\). So, using the map in (\ref{mero}) and the inclusion in the completion, we obtain a morphism
\begin{align*}
j_*\mathcal O_U(T)&\longrightarrow \Frac(\mathcal O_{T,Q})^{\widehat{\phantom{x}}{^Q}}\simeq k((t_Q))\\
f&\longmapsto \sum\limits_{n\gg-\infty}a_{n,Q}(f)t^n_Q.
\end{align*}
Finally, for the \'etale open \(h:T\to X\), we define
\begin{align*}
\varepsilon_P(T):j_*\mathcal O_U(T)&\longrightarrow i_*\mathcal O_P(T)=\prod\limits_{Q\in h^{-1}(P)} \mathcal O_Q(Q)=\prod\limits_{Q\in h^{-1}(P)} k\\
f&\longmapsto (a_{0,Q}(f))_{Q\in h^{-1}(P)}.
\end{align*}
We are left to verify that \(\varepsilon_P\) is a morphism of sheaves. Let \(h:T\to X\) and \(h':T'\to X\) be two \'etale open with a morphism \(g:T'\to T\) over \(X\). The non-trivial case is that in which there is a point \(Q'\) in \(T'\) above \(P\). We put \(Q:=g(Q')\). As above, the choice of uniformizer \(x_P\) in \(\mathcal O_{X,P}\), induces choices of uniformizers \(t_Q\) in \(\mathcal O_{T,Q}\) and \(t'_{Q'}\) in \(\mathcal O_{T',Q'}\). These choices are compatible via \(g\). As above, we can assume that also \(T'\) is integral. We clearly get a commutative diagram
\[
\begin{tikzcd}
j_*\mathcal O_U(T)\arrow[r] \arrow[d] &\Frac(\mathcal O_{T,Q})\arrow[r] \arrow[d] &\Frac(\mathcal O_{T,Q})^{\widehat{\phantom{x}}}{^Q}\simeq k((t_Q)) \arrow[d] \\
j_*\mathcal O_U(T')\arrow[r] &\Frac(\mathcal O_{T',Q'})\arrow[r] &\Frac(\mathcal O_{T',Q'})^{\widehat{\phantom{x}}}{^{Q'}}\simeq k(({t'}_{Q'})) \nospacecomma
\end{tikzcd}
\]
where the rightmost vertical arrow sends \(t_Q\) to \(t'_{Q'}\) and is continuous, so it preserves the coefficients of a Laurent series. This shows that \(\varepsilon_P\) is a morphism of sheaves.
\begin{remark}
There is no need to assume that \(k\) is characteristic \(0\), nor to assume that it is algebraically closed. One should only replace \(k\) with the residue fields at \(P\) and \(Q\) accordingly.
\end{remark}
We will now define the residue at \(P\) morphism
\[
\Res_P:j_*\Omega^1_{U}\longrightarrow i_*\mathcal O_P.
\]
Again, for an \'etale open \(h:T\to X\), if \(P\not\in h(T)\), we have \(i_*\mathcal O_P(T)=0\) and we put \(\Res_P(T)=0\). So we assume there is at least a point \(Q\) in \(T\) such that \(h(Q)=P\). As before, we assume that \(T\) is integral and \(\eta\) will denote its generic point. We have a natural map \(\Omega^1_{T,Q}\to \Omega^1_{T,\eta}\). The choice of a uniformizer \(t_Q\) for the discrete valuation ring \(\mathcal O_{T,Q}\) gives a commutative diagram
\[
\begin{tikzcd}
\Omega^1_{T,Q}\arrow[r] \arrow[d,"\sim"]&\Omega^1_{T,\eta}\arrow[d,"\sim"] \\
\mathcal O_{T,Q} dt_Q \arrow[r] &\Frac(\mathcal O_{T,Q})dt_Q\nospaceperiod
\end{tikzcd}
\]
We then deduce a map given by
\[
\begin{tikzcd}
j_*\Omega^1_U(T)=\Omega^1(U\times_X T) \arrow[r,hook] &\Frac(\mathcal O_{T,Q})dt_Q\nospacecomma
\end{tikzcd}
\]
passing to the completion, we obtain
\begin{align*}
j_*\Omega^1_U(T)&\longrightarrow \Frac(\mathcal O_{T,Q})^{\widehat{\phantom{x}}{^Q}}dt_Q\simeq k((t_Q))dt_Q\\
\omega&\longmapsto fdt_Q=\sum\limits_{n\gg-\infty}a_{n,Q}(f)t^n_Qdt_Q.
\end{align*}
Finally, for the \'etale open \(h:T\to X\), we define
\begin{align*}
\Res_P(T):j_*\Omega^1_U(T)&\longrightarrow i_*\mathcal O_P(T)=\prod\limits_{Q\in h^{-1}(P)} \mathcal O_Q(Q)=\prod\limits_{Q\in h^{-1}(P)} k\\
\omega&\longmapsto (a_{-1,Q}(f))_{Q\in h^{-1}(P)}.
\end{align*}
In order to show that we constructed a morphism of sheaves, we will use the following result.
\begin{lemma}
In the notation above, the map \(\Res_P(T)\) does not depend on the choice of \(t_Q\).
\end{lemma}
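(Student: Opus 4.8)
The plan is to reduce the statement to a computation inside the completed local field and to isolate the two facts that make the residue coordinate-free: that the residue of an exact differential vanishes, and that in characteristic $0$ every differential differs from a scalar multiple of $d\log$ of the uniformizer by an exact form. Since both sides of the claimed equality are computed in the completion $\widehat K:=\Frac(\mathcal O_{T,Q})^{\widehat{\phantom{x}}^Q}$, whose differentials form a one-dimensional $\widehat K$-vector space spanned equally by $dt$ or by $ds$ for any two uniformizers $t,s$, it suffices to fix this field and to show that for a differential $\omega$ the coefficient of $dt/t$ read off from the $t$-expansion agrees with the coefficient of $ds/s$ read off from the $s$-expansion.

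First I would record the elementary observation that for any uniformizer $s$ and any $f\in k((s))$ one has $\Res_s(df)=0$: writing $f=\sum_n c_n s^n$ gives $df=\sum_n n c_n s^{n-1}\,ds$, and the coefficient of $s^{-1}\,ds$ comes from the index $n=0$, hence equals $0\cdot c_0=0$. This step requires nothing about the characteristic and will be used in the chosen coordinate as well as in the comparison coordinate.

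Next, working in the fixed coordinate $t$, I would write $\omega=\sum_n a_n t^n\,dt$ and observe that for every $n\neq-1$ the term $a_n t^n\,dt$ equals $d\bigl(\tfrac{a_n}{n+1}t^{n+1}\bigr)$; this is the only place where characteristic $0$ enters, since $n+1$ must be invertible for all $n\neq-1$ (in characteristic $p$ the form $t^{p-1}\,dt$ is not exact). Because a Laurent series has only finitely many negative-degree terms, the antiderivative again lies in $k((t))$, so collecting terms yields a decomposition $\omega=a_{-1}\,\tfrac{dt}{t}+dg$ with $g\in k((t))$ and $a_{-1}=\Res_t(\omega)$.

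Finally I would evaluate the $s$-residue of this decomposition. By the first step $\Res_s(dg)=0$, so $\Res_s(\omega)=a_{-1}\,\Res_s\!\bigl(\tfrac{dt}{t}\bigr)$, and it remains to check that $\Res_s(\tfrac{dt}{t})=1$. Writing $t=s\,u$ with $u$ a unit of $k[[s]]$ gives $\tfrac{dt}{t}=\tfrac{ds}{s}+\tfrac{du}{u}$; since $u$ is a unit, $\tfrac{du}{u}=(u'/u)\,ds$ has no pole and hence $s$-residue $0$, while $\Res_s(\tfrac{ds}{s})=1$. Therefore $\Res_s(\omega)=a_{-1}=\Res_t(\omega)$, which is the asserted independence. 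The main obstacle is simply to organize the argument so that these two non-trivial ingredients—vanishing on exact forms and the logarithmic comparison $\Res_s(dt/t)=1$—carry the whole proof; once the decomposition $\omega=a_{-1}\,d\log t+dg$ is in hand the conclusion is immediate, and the only genuine input is the characteristic-$0$ exactness of $t^n\,dt$ for $n\neq-1$.
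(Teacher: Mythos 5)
Your argument is correct and complete. The paper itself gives no proof here---it simply defers to \cite[Ch.~II, \S 11]{Serre88}---so your write-up supplies the standard characteristic-$0$ argument that the citation encapsulates: decompose \(\omega = a_{-1}\,dt/t + dg\) with \(g\) in the completed field (using that \(n+1\) is invertible for \(n\neq -1\)), note that exact forms have vanishing residue in any coordinate, and check \(\Res_s(dt/t)=1\) via \(t=su\) with \(u\) a unit. The one point worth making explicit, which you handle correctly, is that \(g\) lives in \(\Frac(\mathcal O_{T,Q})^{\widehat{\phantom{x}}^{Q}} = k((t)) = k((s))\), so the vanishing of \(\Res_s(dg)\) is legitimate in the second coordinate. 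Your parenthetical caveat is also apt: this route is genuinely restricted to characteristic \(0\) (Serre's \S 11 has to work harder to cover characteristic \(p\)), but that is exactly the setting of the paper, so nothing is lost.
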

\begin{proof}
For the proof we refer to \cite[Ch. II, §11]{Serre88}.
\end{proof}
Let \(h:T\to X\) and \(h':T'\to X\) be two \'etale open with a morphism \(g:T'\to T\) over \(X\). Again, the non-trivial case is that in which there is a point \(Q'\) in \(T'\) above \(P\), we put \(Q:=g(Q')\). We fix, as before, a uniformizer \(t_Q\) for \(\mathcal O_{T,Q}\). Then, the morphism
\[
g^{\#}_{Q'}:\mathcal O_{T,Q}\longrightarrow \mathcal O_{T',Q'}
\]
sends \(t_Q\) to a uniformizer for \(\mathcal O_{T',Q'}\) that we will denote \(t'_{Q'}:=g^{\#}_{Q'}(t_Q)\). As in the proof that the evaluation map was a morphism of sheaves, the morphism \(g\) induces a commutative diagram
\[
\begin{tikzcd}
j_*\Omega^1_U(T)\arrow[r] \arrow[d] &\Frac(\mathcal O_{T,Q})^{\widehat{\phantom{x}}{^Q}}dt_Q\simeq k((t_Q))dt_Q \arrow[d]\\
j_*\Omega^1_U(T')\arrow[r] &\Frac(\mathcal O_{T',Q'})^{\widehat{\phantom{x}}{^{Q'}}}dt'{_Q'}\simeq k((t'_{Q'}))dt'_{Q'}.
\end{tikzcd}
\]
The independence of residue on the choice of a parameter, allows us to compute \(\Res_P(T')\) with respect to \(t'_{Q'}\). From the commutativity of the diagram above, it follows that \(\Res_P\) is a morphism of sheaves.

\section{A resolution for \(\omega^\bullet_{X/k^\times}\)}\label{res}
We come back to considering a log curve \(X/k^\times\) as in §\ref{logcurve}. Our goal is to compute explicitly the cohomology of \(\omega^\bullet_{X/k^\times}\). As a first step, we will construct a Mayer-Vietoris sequence for the log-de Rham complex of \(X\). For this, we put
\[
X_0=\coprod\limits_{v \in \mathscr V} X_v\qquad\text{and}\qquad X_1=\coprod\limits_{e \in \mathscr E}X_e
\]
and denote with \(i_0:X_0\longrightarrow X\) and \(i_1:X_1\longrightarrow X\) the natural morphisms of log schemes. Given an edge \(e=[v,w]\) in the graph we can take its start \(A(e)=v\) and end \(B(e)=w\). Considering the inclusion of each point \(X_e\) in \(X_{A(e)}\) and in \(X_{B(e)}\) respectively, we obtain two morphisms of log schemes
\[
i_A,\,i_B:X_1\longrightarrow X_0.
\]
Restriction induces a morphism of complexes of sheaves
\[
\rho:\omega^\bullet_{X/k^\times}\longrightarrow i^{\circ}_{0*}\omega^\bullet_{X_0/k^\times}.
\]
Analogously we have
\[
\omega^\bullet_{X_0/k^\times}\longrightarrow i^{\circ}_{A*}\omega^\bullet _{X_1/k^\times}.
\]
After the pushforward by \(i^{\circ}_0\), this last morphism becomes
\begin{equation}\label{etaA}
\eta_A:i^{\circ}_{0*} \omega^\bullet_{X_0/k^\times} \longrightarrow i^{\circ}_{1*}\omega^\bullet _{X_1/k^\times}.
\end{equation}
Similarly, from the morphism \(i_B\), we define \(\eta_B\). We put \(\eta:=\eta_A-\eta_B\), this is the difference of restrictions.
\begin{lemma}\label{mv}
The complexes of sheaves introduced above fit into a Mayer-Vietoris exact sequence
\begin{equation}\label{sesrel}
\begin{tikzcd}
0 \arrow[r] &\omega^\bullet_{X/k^\times} \arrow[r,"\rho"] & i^{\circ}_{0*}\omega^\bullet_{X_0/k^\times} \arrow[r,"\eta"] &i^{\circ}_{1*}\omega^\bullet _{X_1/k^\times}\arrow[r]& 0\nospaceperiod
\end{tikzcd}
\end{equation}
\end{lemma}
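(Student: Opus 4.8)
The plan is to prove exactness by working on the small étale site of \(X^\circ\). Since \(\rho\) and \(\eta\) were constructed as morphisms of complexes (they are restriction maps and hence commute with the de Rham differential), and since all three complexes are concentrated in cohomological degrees \(0\) and \(1\), the sequence (\ref{sesrel}) is a short exact sequence of complexes as soon as the underlying sequences of sheaves are exact in each of these two degrees. Exactness of a sequence of étale sheaves can be verified on stalks at geometric points, so I reduce to checking two sequences of stalks at each point. By the local structure recalled in §\ref{logcurve}, every point of \(X^\circ\) is étale-locally either a smooth point of a single component or an ordinary double point, and it suffices to treat these two local models.

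At a smooth point there is a unique component \(X_v^\circ\) passing through it and no node in an étale neighbourhood. Hence the stalk of \(i^\circ_{1*}\omega^\bullet_{X_1/k^\times}\) vanishes, \(\eta=0\), and \(\rho\) identifies the stalks of \(\omega^q_{X/k^\times}\) and \(\omega^q_{X_v/k^\times}\) for \(q=0,1\) (recall from §\ref{logcurve} that \(\omega^1_{X/k^\times}\simeq\omega^1_{X_v/k^\times}\simeq\Omega^1_{X/k}\) there). Exactness is then immediate.

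The substantial case is a double point, where étale-locally \(X^\circ\simeq\Spec k[x,y]/(xy)\) with branches \(X_v^\circ=\Spec k[x]\) and \(X_w^\circ=\Spec k[y]\) meeting at the node \(X_e^\circ=\Spec k\). In degree \(0\) the sequence reads
\[
0\longrightarrow\mathcal O_X\longrightarrow\mathcal O_{X_v}\oplus\mathcal O_{X_w}\overset{\eta}{\longrightarrow}\mathcal O_{X_e}=k\longrightarrow 0,
\]
with \(\eta\) the difference of the two evaluations at the node; its exactness is exactly the gluing description \(k[x,y]/(xy)=k[x]\times_k k[y]\) of a node as the fibre product over the evaluation maps. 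In degree \(1\) I will use the explicit presentations of §\ref{logcurve}: \(\omega^1_{X/k^\times}\) is free of rank one on \(d\log x\), subject to \(d\log x+d\log y=0\), and \(\rho\) sends \(f\,d\log x\) to its two restrictions \(\bigl(f(x,0)\,d\log x,\,-f(0,y)\,d\log y\bigr)\), while \(\eta\) restricts to the node and takes the difference. Writing a general \(f\in k[x,y]/(xy)\) as \(a_0+\sum_{i\ge 1}a_i x^i+\sum_{j\ge 1}b_j y^j\), a short direct computation then yields injectivity of \(\rho\), surjectivity of \(\eta\), and the equality \(\IIm\rho=\Ker\eta\).

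The point that needs care, and which I regard as the main obstacle, is precisely this degree-\(1\) computation at the node: one must keep track of the relation \(d\log x+d\log y=0\) imposed by the log structure together with the orientation convention \(e=[v,w]\) defining \(\eta=\eta_A-\eta_B\), since the sign it produces is what makes \(\eta\) simultaneously surjective and of kernel equal to \(\IIm\rho\). I will also note that the loop case \(e=[v,v]\), in which the two local branches belong to the same component, is covered by the same argument: the stalk computation sees only the two étale-local branches \(\Spec k[x]\) and \(\Spec k[y]\) and is insensitive to whether \(A(e)\) and \(B(e)\) coincide globally.
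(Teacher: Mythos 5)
Your proof is correct and follows essentially the same route as the paper: reduce to the two étale-local models, dispose of the smooth-point case immediately, and verify exactness for $\Spec k[x,y]/(xy)$ by a direct computation using the fibre-product description $k[x,y]/(xy)=k[x]\times_k k[y]$ in degree $0$ and the relation $d\log x+d\log y=0$ in degree $1$ (the paper phrases the reduction via global sections on the local model plus flatness of étale opens rather than via stalks, but this is the same argument). Your explicit attention to the sign coming from $\eta=\eta_A-\eta_B$ and to the loop case $e=[v,v]$ is a welcome elaboration of the "direct calculation" the paper leaves to the reader.
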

\begin{proof}
\'Etale locally around a smooth point, \(X^{\circ}\) is an \'etale pullback of \(\mathbb A_k^1\) and there is nothing to prove. \'Etale locally around a double point, \(X^{\circ}\) is an \'etale open of \(\Spec[x,y]/(xy)\). So we are left to prove the statement for \(X=\Spec[x,y]/(xy)\) and to consider global sections, indeed sections on an \'etale open will be obtained by tensoring the exact sequence by a flat module. The proof is now reduced to a direct calculation.\\
\end{proof}
We introduce the double complex
\[
\mathcal A_{X/k^\times}=
\begin{tikzcd}
\bigoplus\limits_{v\in\mathscr V}i^{\circ}_{v*}\omega^1_{X_v/k^\times} \arrow[r,"\eta^1"] &\bigoplus\limits_{e\in\mathscr E}i^{\circ}_{e*}\omega^1_{X_e/k^\times}\\
\bigoplus\limits_{v\in\mathscr V}i^{\circ}_{v*}\mathcal O_{X_v} \arrow[r,"\eta^0"]\arrow[u,"d"] &\bigoplus\limits_{e\in\mathscr E}i^{\circ}_{e*}\mathcal O_{X_e}\arrow[u,"d"]\nospaceperiod
\end{tikzcd}
\]
The lemma above shows that we have a quasi-isomorphism in the category of complexes of sheaves
\begin{equation}\label{qisotot}
\rho:\omega^\bullet_{X/k^\times}\longrightarrow\Tot(\mathcal A_{X/k^\times}).
\end{equation}
For each vertex \(v\) in \(\mathscr V\), we will call \(U_v=X^{\circ}_v\setminus \coprod_{e \in \mathscr E_v} X^{\circ}_e\), this is an affine smooth curve over \(k\). We denote \(j_v:U_v\hookrightarrow X^{\circ}\) and \(j'_v:U_v\hookrightarrow X^{\circ}_v\) the natural open embeddings. For each point of intersection \(i'^{\circ}_e:X^{\circ}_e\hookrightarrow X^{\circ}_v\), we fix a uniformizer \(x_e\) for \(\mathcal O_{X_v,X_e}\). Following section \ref{Laurent}, we have a morphism of sheaves on \(X^{\circ}_v\) ``evaluation at \(X_e\)''
\[
\varepsilon_{X_e}:j'_{v*}\mathcal O_{U_v}\longrightarrow i'^{\circ}_{e*}\mathcal O_{X_e},
\]
after pushforward via \(i^{\circ}_v:X^{\circ}_v\hookrightarrow X^{\circ}\), we obtain a morphism on \(X^{\circ}\) that we will denote as follows
\[
(-)|_{X_e}:j_{v*}\mathcal O_{U_v}\longrightarrow i^{\circ}_{e*}\mathcal O_{X_e}.
\]
Analogously, the residue \(\Res_{X_e}:j'_{v*}\Omega^1_{U_v}\longrightarrow i'_{e*}\mathcal O_{X_e}\) induces, after pushforward via \(i_v\), a morphism that for simplicity we keep calling in the same way:
\[
\Res_{X_e}:j_{v*}\Omega^1_{U_v}\longrightarrow i^{\circ}_{e*}\mathcal O_{X_e}.
\]
Using the maps we have introduced we construct a double complex
\[
\mathcal B_{X/k^\times}=
\begin{tikzcd}
\bigoplus\limits_{v\in\mathscr V}j_{v*}\Omega_{U_v}^1 \arrow[r,"\zeta^1"] &\bigoplus\limits_{e\in\mathscr E}i^{\circ}_{e*}\omega^1_{X_e/k^\times}\\
\bigoplus\limits_{v\in\mathscr V}j_{v*}\mathcal O_{U_v} \arrow[r,"\zeta^0"]\arrow[u,"d"] &\bigoplus\limits_{e\in\mathscr E}i^{\circ}_{e*}\mathcal O_{X_e}\arrow[u,"d=0"]\nospaceperiod
\end{tikzcd}
\]
The horizontal arrows are given by
\begin{align*}
\zeta^1(\omega_v)_v&=\left(\left[\Res_{X_e}(\omega_v)d\log x - \Res_{X_e}(\omega_w)d\log y\right]\right)_{e=[v,w]},\\
\zeta^0(f_v)_v&=\left(f_v|_{X_e}-f_w|_{X_e}\right)_{e=[v,w]}.
\end{align*}
\begin{remark}
As pointed out in §\ref{Laurent}, our definition of \(f_v|_{X_e}\) relies on the choice of a uniformizer. Nonetheless, the cohomology groups \(\HHH^i(X,\Tot(\mathcal B_{X/k^\times}))\) are unaffected by this decision. Indeed, Lemma \ref{expl} will show that \(\HHH^i(X,\Tot(\mathcal B_{X/k^\times}))\) is naturally isomorphic to \(\HH^i_{\log}(X/k^{\times})\). Clearly, these last groups are independent on the choice above.
\end{remark}
Now, we take global sections indexwise and obtain
\[
B_{X/k^\times}=
\begin{tikzcd}
\bigoplus\limits_{v\in\mathscr V}\HH^0(U_v,\Omega_{U_v}^1) \arrow[r,"\zeta^1"] &\bigoplus\limits_{e\in\mathscr E}\HH^0(X_e,\omega^1_{X_e/k^\times})\\
\bigoplus\limits_{v\in\mathscr V}\HH^0(U_v,\mathcal O_{U_v}) \arrow[r,"\zeta^0"]\arrow[u,"d"] &\bigoplus\limits_{e\in\mathscr E}\HH^0(X_e,\mathcal O_{X_e})\arrow[u,"d=0"]\nospaceperiod
\end{tikzcd}
\]
Then the cohomology of this complex computes \(\HH^i_{\log}(X/k^{\times})\).
\begin{lemma}\label{expl} 
With the notation above, we have an isomorphism
\[
\HH^i_{\log}(X/k^\times)\simeq \HH^i(\Tot(B_{X/k^\times})),
\]
for any integer \(i\).
\end{lemma}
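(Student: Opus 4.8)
The plan is to compare $\mathcal B_{X/k^\times}$ with $\mathcal A_{X/k^\times}$, for which (\ref{qisotot}) already gives a quasi-isomorphism $\rho:\omega^\bullet_{X/k^\times}\to\Tot(\mathcal A_{X/k^\times})$, so that $\HH^i_{\log}(X/k^\times)\simeq\HHH^i(X,\Tot(\mathcal A_{X/k^\times}))$. First I would construct a morphism of double complexes $\mathcal A_{X/k^\times}\to\mathcal B_{X/k^\times}$ which is the identity on the right-hand column (the terms $\bigoplus_e i^\circ_{e*}\mathcal O_{X_e}$ and $\bigoplus_e i^\circ_{e*}\omega^1_{X_e/k^\times}$ agree in the two complexes, and in both the right vertical differential vanishes because $\mathcal O_{X_e}=k$ consists of constants), and on the left-hand column is induced by the natural restrictions $i^\circ_{v*}\mathcal O_{X_v}\hookrightarrow j_{v*}\mathcal O_{U_v}$ and $i^\circ_{v*}\omega^1_{X_v/k^\times}\hookrightarrow j_{v*}\Omega^1_{U_v}$; here, under the identification $\omega^1_{X_v/k^\times}\simeq\mathcal O_{X'_v}\,d\log x$ of (\ref{diviso}), a log differential restricts on $U_v$ to a differential with at worst a simple pole at each $X_e$. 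One must check that this is a morphism of double complexes: the vertical squares commute because $d$ commutes with restriction, and the horizontal squares commute because on a regular function the evaluation $\varepsilon_{X_e}$ returns the ordinary value, while on $f\,d\log x$ the residue $\Res_{X_e}$ returns the constant term $f(X_e)$ — precisely the coefficients that $\eta^0$ and $\eta^1$ produce. Thus $\eta^0,\eta^1$ are identified with the restrictions of $\zeta^0,\zeta^1$ to the subsheaves of regular sections.

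Next I would show that this morphism is a quasi-isomorphism of complexes of sheaves. The left-hand restrictions are injective, giving a short exact sequence $0\to\mathcal A_{X/k^\times}\to\mathcal B_{X/k^\times}\to\mathcal C\to 0$ of double complexes. The quotient $\mathcal C$ has zero right-hand column, so its horizontal differentials vanish and $\Tot(\mathcal C)$ reduces to the left vertical differential; it is supported at the double points, where its stalk is the two-term complex of principal parts $\bigl[\,\bigoplus_{n\ge1}k\,x^{-n}\xrightarrow{\,d\,}\bigoplus_{n\ge2}k\,x^{-n}\,dx\,\bigr]$, the simple pole $x^{-1}dx$ having already been absorbed into the log differentials. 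This complex is exact: since $d(x^{-n})=-n\,x^{-n-1}dx$ and $\Char k=0$, the differential is an isomorphism between the two sides. Hence $\Tot(\mathcal C)$ is acyclic and $\Tot(\mathcal A_{X/k^\times})\to\Tot(\mathcal B_{X/k^\times})$ is a quasi-isomorphism, so $\HHH^i(X,\Tot(\mathcal A_{X/k^\times}))\simeq\HHH^i(X,\Tot(\mathcal B_{X/k^\times}))$.

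Finally I would verify that the hypercohomology of $\Tot(\mathcal B_{X/k^\times})$ is computed by the global sections $B_{X/k^\times}$. Every sheaf in $\mathcal B_{X/k^\times}$ is acyclic: the terms $i^\circ_{e*}\mathcal O_{X_e}$ and $i^\circ_{e*}\omega^1_{X_e/k^\times}$ are skyscrapers, hence flasque, while $j_{v*}\mathcal O_{U_v}$ and $j_{v*}\Omega^1_{U_v}$ are pushforwards of quasi-coherent sheaves along the affine morphism $j_v$ (the curve $U_v$ is affine and its complement in $X^\circ_v$ is finite), so they have vanishing higher cohomology on $X^\circ$. Since a complex of acyclic sheaves has its hypercohomology computed by global sections — the hypercohomology spectral sequence has a single nonzero row $q=0$ — we obtain $\HHH^i(X,\Tot(\mathcal B_{X/k^\times}))\simeq\HH^i(\Tot(B_{X/k^\times}))$. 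Chaining the three identifications with $\HH^i_{\log}(X/k^\times)=\HHH^i(X,\omega^\bullet_{X/k^\times})$ proves the lemma for every $i$.

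I expect the two genuinely substantive points to be the horizontal compatibility in the first step and the local exactness of $\mathcal C$ in the second; the rest is formal. In particular one must check that the residue occurring in $\zeta^1$ reproduces the plain restriction $\eta^1$ on the subsheaf of log differentials, and that the uniformizer chosen to define $\varepsilon_{X_e}$ does not obstruct the comparison — it does not, since on regular sections the constant term is independent of that choice.
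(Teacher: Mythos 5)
Your proof is correct, and its overall skeleton matches the paper's: you build the same morphism of double complexes $\alpha:\mathcal A_{X/k^\times}\to\mathcal B_{X/k^\times}$ (identity on the edge column, restriction/$\beta'$ via (\ref{diviso}) on the vertex column), feed in the quasi-isomorphism (\ref{qisotot}), and use affineness of the $U_v$ and $X_e$ to pass from $\Tot(\mathcal B_{X/k^\times})$ to its global sections. Where you genuinely diverge is the middle step. The paper compares the two column-filtration spectral sequences at the $E_1$-page and reduces to the single claim that $\HHH^i(X,i^\circ_{v*}\omega^\bullet_{X'_v/k})\to\HHH^i(X,j_{v*}\Omega^\bullet_{U_v})$ is an isomorphism, which it settles by invoking the argument of Deligne's Proposition II.3.13. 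You instead prove the stronger, sheaf-level statement that $\alpha$ is a quasi-isomorphism, by computing the quotient $\mathcal C$ stalkwise at the double points: the two-term complex $\bigoplus_{n\ge1}k\,x^{-n}\xrightarrow{d}\bigoplus_{n\ge2}k\,x^{-n}dx$ is an isomorphism because $d(x^{-n})=-n\,x^{-n-1}dx$ and $\Char k=0$ (here the stalk of $j_{v*}\mathcal O_{U_v}$ at a geometric point over $X_e$ is $\mathcal O^{sh}[1/x]$ for the strictly henselian DVR of the branch, so the quotient really is the stated direct sum of principal parts). In effect you have inlined the proof of the cited comparison in the one-dimensional case, which makes the argument more self-contained and elementary at the cost of a somewhat longer local verification; the paper's route is shorter but leans on an external result and on the two-column degeneration of the spectral sequences. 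Your checks of the horizontal compatibility (that $\varepsilon_{X_e}$ and $\Res_{X_e}$ restrict to the ordinary evaluation and to the coefficient of $d\log x$ on regular, respectively log-regular, sections, so that $\zeta^\bullet\circ\alpha=\alpha\circ\eta^\bullet$) and of the independence from the chosen uniformizer on regular sections are exactly the points that need saying, and you say them.
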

\begin{proof}
We consider the morphism of complexes
\begin{equation*}
\beta:i_{v*}^\circ\omega_{X'_v/k}^\bullet\longrightarrow j_{v*}\Omega_{U_v}^\bullet.
\end{equation*}
We compose it with the isomorphism in (\ref{diviso}) and obtain a morphism
\[
\beta':i_{v*}^\circ\omega_{X_v/k^{\times}}^\bullet\longrightarrow j_{v*}\Omega_{U_v}^\bullet.
\]
Applying \(\beta'\) and the identity to the first and second column respectively, we get a morphism of double complexes \(\alpha:\mathcal A_{X/k^\times}\to\mathcal B_{X/k^\times}\). The quasi-isomorphism in (\ref{qisotot}) tells us that \(\HH^i_{\log}(X/k^\times)\simeq\HHH^i(X,\Tot(\mathcal A_{X/k^\times}))\). The fact that the \(U_v\) and the \(X_e\) are affine implies
\[
\HHH^i(X,\Tot(\mathcal B_{X/k^\times}))\simeq \HH^i(\Tot(B_{X/k^\times})).
\]
So we are reduced to show that \(a:\HHH^i(X,\Tot(\mathcal A_{X/k^\times}))\to\HHH^i(X,\Tot(\mathcal B_{X/k^\times}))\), the morphism induced by \(\alpha\) in hypercohomology, is an isomorphism. To show this, we take the spectral sequences (cf. \cite[Ch. 0, §11]{EGAIII_I})
\[
E_{\mathcal A,1}^{p,q}=\HHH^q(X,\mathcal A_{X/k^\times}^{p,\bullet})\Longrightarrow \HHH^{p+q}(X,\Tot(\mathcal A_{X/k^\times}))
\]
and
\[
E_{\mathcal B,1}^{p,q}=\HHH^q(X,\mathcal B_{X/k^\times}^{p,\bullet})\Longrightarrow \HHH^{p+q}(X,\Tot(\mathcal B_{X/k^\times})).
\]
The morphism \(\alpha\) induces a morphism of spectral sequences \(a'_{p,q}:E^{p,q}_{\mathcal A,1}\to E^{p,q}_{\mathcal B,1}\). We want to show that \(a'\) is an isomorphism. For \(p=1\), \(a'_{1,q}\) is the identity. The two spectral sequences have only two non-zero colomuns, hence we are left to show that the morphism on the first column \(a'_{0,q}\) is an isomorphism. Since \(a'_{0,q}\) is obtained from \(\beta'\) taking hypercohomology, using (\ref{diviso}), we can look at
\[
b:\HHH^i(X,i_{v*}^\circ\omega_{X'_v/k}^\bullet)\longrightarrow \HHH^i(X,j_{v*}\Omega_{U_v}^\bullet).
\]
Now, similarly to \cite[Prop. II 3.13]{Deligne70}, one shows that \(b\) is an isomorphism. To conclude, we have seen that \(a'\) is an isomorphism, so we deduce that the morphism between the abutments \(a:\HHH^i(X,\Tot(\mathcal A_{X/k^\times}))\to\HHH^i(X,\Tot(\mathcal B_{X/k^\times}))\) is an isomorphism.
\end{proof}

\section{Combinatorial Monodromy, Du Bois Cohomology and Invariant Cycles}\label{mon}
We construct an endomorphism \(\tilde{N}\) on the cohomology of a log curve \(\HH^1_{\log}(X/k^\times)\), notation as in §\ref{logcurve}. We will refer to this operator as ``combinatorial monodromy''. We have two justifications for the use of the term ``monodromy''. First, as we will see later in §\ref{comp}, when \(X\) comes from a simple normal crossing divisor in a proper family over the complex disc, this operator coincides with the classical monodromy (Theorem \ref{moncomp}). Second, in this section we show that \(\tilde{N}\) provides an invariant cycles exact sequence (Theorem \ref{thm2}) in analogy with other monodromies, as for the complex and \(\ell\)-adic situation (see \cite[§1.7]{DeCataldo09} and \cite[Corollaire 6.2.8]{Beilinson82}) and for the \(p\)-adic Hyodo-Kato monodromy (\cite{Chiarellotto99} and \cite{Chiarellotto16}).\\
Let \([\omega]\) in \(\HH^1_{\log}(X/k^\times)\) be a cohomology class. Using the description of Lemma \ref{expl}, the element \([\omega]\) can be represented by a hypercocycle \(((\omega_v)_{v\in \mathscr V},(f_e)_{e \in \mathscr E})\) with \(\omega_v\) in \(\HH^0(U_v,\Omega_{U_v/k}^1)\) and \(f_e\) in \(\HH^0(X_e,\mathcal O_{X_e})=k\) such that
\[
\Res_{X_e}(\omega_v)+\Res_{X_e}(\omega_w)=0,
\]
for every \(e=[v,w]\). We define the combinatorial monodromy as follows
\begin{align*}\label{combmon}
\tilde{N}:\HH^1_{\log}(X/k^\times)&\longrightarrow\HH^1_{\log}(X/k^\times)\\
[\omega]=\left[\left(\left(\omega_v\right)_{v\in \mathscr V},\left(f_e\right)_{e \in \mathscr E}\right)\right]&\longmapsto\left[\left(0,\left(\Res_{X_e}\left(\omega_v\right)\right)_{e=[v,w]}\right)\right].
\end{align*}
To have invariant cycles we want to give a cohomological descripition for \(\Ker\tilde{N}\) in terms of \(X\). In the topological setting (see §\ref{intro} and §\ref{comp}) this should be given by the singular cohomology of \(X^{\circ}\). This group can also be computed using a complex of differentials, the Du Bois complex (cf. \cite[Th\'eor\`eme 4.5]{DuBois81}). Since its definition makes sense algebraically, we adapt it to our situation and put
\[
\underline{\Omega}_{X/k}^\bullet=\Ker\left(\bigoplus\limits_{v\in \mathscr V}i_{v*}^\circ \Omega_{X_v/k}^\bullet \overset{\underline{\eta}}{\longrightarrow} \bigoplus\limits_{e\in \mathscr E}i_{e*}^\circ \Omega_{X_e/k}^\bullet\right).
\]
The morphism \(\underline{\eta}\) is the ordered difference as in (\ref{sesrel}). This is exactly the descripition in \cite[p. 69]{DuBois81} of the Du Bois complex for normal crossing divisors. We observe that we have a natural morphism
\[
\underline{\Omega}_{X/k}^\bullet\longrightarrow\bigoplus\limits_{v\in\mathscr V}i^{\circ}_{v*}\omega^\bullet_{X_v/k^\times}\longrightarrow\Tot(\mathcal A_{X/k^\times}),
\]
composing it the quasi-inverse of \(\rho\) (see (\ref{qisotot})) we obtain a morphism in the derived category
\[
\underline{sp}:\underline{\Omega}_{X/k}^\bullet\to \omega^\bullet_{X/k^\times}.
\]
We define the Du Bois cohomology of \(X\), for any integer \(i\), as \(\HH^i_{DB}(X/k)=\HHH^i(X,\underline{\Omega}_{X/k}^\bullet)\) and we call \(\underline{sp}\) also the morphism induced in hypercohomology.
\begin{theorem}\label{thm2}
We have an exact sequence
\[
0\longrightarrow\HH^1_{DB}(X/k)\overset{\underline{sp}}{\longrightarrow}\HH^1_{\log}(X/k^{\times})\overset{\tilde{N}}{\longrightarrow}\HH^1_{\log}(X/k^{\times}).
\]
\end{theorem}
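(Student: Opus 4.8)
The plan is to compute both sides through the resolution of Lemma~\ref{expl} together with an analogous presentation of the Du Bois complex, and then to match them by a Hodge-type argument on the dual graph $\Gr(X)$.

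First I would present $\HH^1_{DB}(X/k)$ by the device already used for $\omega^\bullet_{X/k^\times}$. Since $\underline\Omega^\bullet_{X/k}$ is by definition $\Ker(\underline\eta)$ and $\underline\eta$ is surjective on the small étale site (restriction of a function to a node is surjective, and the degree-one target is zero), it is quasi-isomorphic to $\Tot(\underline{\mathcal A}_{X/k})$, where $\underline{\mathcal A}_{X/k}=\bigl[\bigoplus_{v}i^\circ_{v*}\Omega^\bullet_{X_v/k}\xrightarrow{\underline\eta}\bigoplus_{e}i^\circ_{e*}\Omega^\bullet_{X_e/k}\bigr]$ is the exact analogue of $\mathcal A_{X/k^\times}$ with log differentials replaced by honest ones. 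Running the column spectral sequences of $\underline{\mathcal A}_{X/k}$ and of $B_{X/k^\times}$ exactly as in the proof of Lemma~\ref{expl} — recalling that the $X_v$ are proper, so their full de Rham cohomology contributes, while the $X_e$ are points — I obtain two short exact sequences sharing the subobject $\HH^1(\Gr(X);k)=\Coker(\partial)$, with $\partial\colon\bigoplus_v k\to\bigoplus_e k$, $(c_v)_v\mapsto(c_{A(e)}-c_{B(e)})_e$ the graph coboundary:
\[
0\to\HH^1(\Gr(X);k)\to\HH^1_{DB}(X/k)\to\bigoplus_v\HH^1_{dR}(X_v/k)\to 0
\]
and, with $\pi$ the projection onto the quotient,
\[
0\to\HH^1(\Gr(X);k)\to\HH^1_{\log}(X/k^\times)\xrightarrow{\pi}E_2^{0,1}\to 0,
\]
where $E_2^{0,1}\subseteq\bigoplus_v\HH^1_{dR}(U_v/k)$ is the kernel of the sum-of-residues differential $d_1^{0,1}\bigl(([\omega_v])_v\bigr)_e=\Res_{X_e}(\omega_v)+\Res_{X_e}(\omega_w)$ for $e=[v,w]$.

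Next I would read off $\Ker\tilde N$ from the cocycles of Lemma~\ref{expl}. For $[\omega]=[((\omega_v)_v,(f_e)_e)]$, the class $\tilde N[\omega]=[(0,(\Res_{X_e}(\omega_v))_{e=[v,w]})]$ vanishes precisely when $(0,(\Res_{X_e}(\omega_v))_e)$ is the total coboundary of some $(g_v)_v\in\bigoplus_v\HH^0(U_v,\mathcal O_{U_v})$; matching the vanishing differential-form part forces each $g_v$ to be a constant $c_v$, whereupon the residue part becomes $\Res_{X_e}(\omega_v)=c_{A(e)}-c_{B(e)}$. Writing $r_e:=\Res_{X_e}(\omega_{A(e)})$, this says exactly that $[\omega]\in\Ker\tilde N$ if and only if the residue vector $(r_e)_e$ lies in $\IIm(\partial)$.

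Then comes the combinatorial heart. Recording residues defines $R\colon E_2^{0,1}\to\bigoplus_e k$, $([\omega_v])_v\mapsto(r_e)_e$. The localization (residue) sequence of each smooth affine $U_v\subset X_v$ gives $\Ker R=\bigoplus_v\HH^1_{dR}(X_v/k)$, since residue-free classes are precisely the restrictions of regular forms; and the residue theorem on each proper $X_v$ identifies $\IIm R$ with the cycle space $\Ker(\partial^*)$, where $\partial^*\colon\bigoplus_e k\to\bigoplus_v k$ is the boundary map sending $e$ to $A(e)-B(e)$. As $R\pi([\omega])=(r_e)_e$ always lies in $\Ker(\partial^*)$, the condition $(r_e)_e\in\IIm(\partial)$ is equivalent to $(r_e)_e\in\IIm(\partial)\cap\Ker(\partial^*)$. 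The key lemma is that this intersection is zero: the Hodge decomposition $\bigoplus_e k=\IIm(\partial)\oplus\Ker(\partial^*)$ holds over $\Q$ by positive-definiteness of the standard inner product and is preserved by the base change $\Q\hookrightarrow k$. Hence $\Ker\tilde N=\pi^{-1}(\Ker R)=\pi^{-1}\bigl(\bigoplus_v\HH^1_{dR}(X_v/k)\bigr)$, an extension of $\HH^1(\Gr(X);k)$ by $\bigoplus_v\HH^1_{dR}(X_v/k)$ of the very same shape as the one for $\HH^1_{DB}(X/k)$.

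Finally I would verify that $\underline{sp}$ is filtered for these two extensions. It is induced by the map of double complexes $\underline{\mathcal A}_{X/k}\to\mathcal A_{X/k^\times}$ (inclusion of regular forms into log forms on each $X_v$, identity on the point terms) followed by the quasi-inverse of $\rho$ from (\ref{qisotot}); being a morphism of double complexes, it induces a morphism of the column spectral sequences and hence a filtered map. On the subobject it is the identity of $\HH^1(\Gr(X);k)$, and on the quotient it is the restriction $\HH^1_{dR}(X_v/k)\to\HH^1_{dR}(U_v/k)$, which is injective with image exactly $\Ker R$ (again by the localization sequence). The five lemma then shows $\underline{sp}$ is injective with image $\pi^{-1}(\Ker R)=\Ker\tilde N$, which is the assertion. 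I expect the main obstacle to lie in this last step: checking that the derived-category zigzag defining $\underline{sp}$ genuinely respects the two column filtrations and realizes restriction on the associated graded, so that the spectral-sequence comparison is legitimate. The vanishing $\IIm(\partial)\cap\Ker(\partial^*)=0$ is the conceptual crux, but it is elementary once both presentations are in place.
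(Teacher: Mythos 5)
Your argument is correct and follows essentially the same route as the paper: both present $\HH^1_{DB}(X/k)$ and $\HH^1_{\log}(X/k^{\times})$ via the column spectral sequences of the double complexes, identify the common subobject $\HH^1(\Gr(X);k)\simeq\Coker(\partial)$, and reduce the statement to the residue theorem, the Gysin sequence for $U_v\subset X_v$, and the combinatorial vanishing $\IIm(\partial)\cap\Ker(\partial^{*})=0$. The only deviation is that you prove that vanishing inline by orthogonality over $\Q$ and base change, whereas the paper invokes it as Lemma \ref{comblemma} citing \cite{Chiarellotto16}; the content is identical.
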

\begin{proof}
The Du Bois complex \(\underline{\Omega}_{X/k}^\bullet\) is quasi-isomorphic to the total complex of
\[
\underline{\mathcal A}_{X/k}=
\begin{tikzcd}
\bigoplus\limits_{v\in\mathscr V}i^{\circ}_{v*}\Omega^1_{X_v/k} \arrow[r,"\underline{\eta}^1"] &0\\
\bigoplus\limits_{v\in\mathscr V}i^{\circ}_{v*}\mathcal O_{X_v} \arrow[r,"\underline{\eta}^0"]\arrow[u,"d"] &\bigoplus\limits_{e\in\mathscr E}i^{\circ}_{e*}\mathcal O_{X_e}\arrow[u,"d"]\nospaceperiod
\end{tikzcd}
\]
We have a morphism of double complexes \(\underline{\mathcal A}_{X/k}\to\mathcal A_{X/k^\times}\). As in the proof of Lemma \ref{expl}, we have two spectral sequences
\[
E_{\underline{\mathcal A},1}^{p,q}=\begin{tikzcd}
\bigoplus\limits_{v\in\mathscr V}\HH^1_{dR}(X_v/k) \arrow[r] &0 \\
\bigoplus\limits_{v\in\mathscr V}\HH^0_{dR}(X_v/k) \arrow[r,"\underline{\alpha}"] &\bigoplus\limits_{e\in\mathscr E}\HH^0_{dR}(X_e/k)
\end{tikzcd}\Longrightarrow\HH^{p+q}_{DB}(X/k)
\]
and
\[
E_{\mathcal A,1}^{p,q}=\begin{tikzcd}
\bigoplus\limits_{v\in\mathscr V}\HH^1_{\log}(X_v/k^\times) \arrow[r,"\beta"] &\bigoplus\limits_{e\in\mathscr E}\HH^1_{\log}(X_e/k^\times) \\
\bigoplus\limits_{v\in\mathscr V}\HH^0_{\log}(X_v/k^\times) \arrow[r,"\alpha"] &\bigoplus\limits_{e\in\mathscr E}\HH^0_{\log}(X_e/k^\times)
\end{tikzcd}\Longrightarrow\HH^{p+q}_{\log}(X/k^\times).
\]
The associated filtrations on the abutments give us a commutative diagram with exact rows
\begin{equation}\label{snakediag}
\begin{tikzcd}
&0 \arrow[d] &0 \arrow[d] &0 \arrow[d] \\
0 \arrow[r] &\Coker\underline{\alpha} \arrow[d,"\sim"] \arrow[r] &\HH^1_{DB}(X/k) \ar[d,"\underline{sp}"] \arrow[r] &\bigoplus\limits_{v \in \mathscr V}\HH^1_{dR}(X_v/k) \arrow[d,"\varphi"] \arrow[r] &0 \\
0 \arrow[r] &\Coker\alpha  \arrow[d] \arrow[r] &\HH^1_{\log}(X/k^\times)\arrow[d,"\tilde{N}"] \arrow[r] &\Ker\beta \arrow[d,"N'"] \arrow[r] &0\\
0 \arrow[r] &0 \arrow[r] &\HH^1_{\log}(X/k^\times) \arrow[r,"\id"] &\HH^1_{\log}(X/k^{\times}) \arrow[r] &0.
\end{tikzcd}
\end{equation}
Here \(N'\) is the morphism induced by \(\tilde{N}\) on \(\Ker \beta\). Since the first column is clearly exact, we are reduced to show that the rightmost column is exact. This is proved in the following lemma.
\end{proof}

\begin{lemma}\label{redinvcyc}
The sequence
\[
0\longrightarrow\bigoplus\limits_{v \in \mathscr V} \HH^1_{dR}(X_v/k) \overset{\varphi}{\longrightarrow}\Ker\beta\overset{N'}{\longrightarrow}\HH^1_{\log}(X/k^{\times}),
\]
in diagram (\ref{snakediag}) is exact.
\end{lemma}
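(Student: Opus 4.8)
The plan is to make both maps explicit through the hypercocycle description of Lemma \ref{expl} and then reduce the whole statement to a single assertion about the dual graph $\Gr(X)$. Recall that $\Ker\beta$ consists of the classes represented by tuples $(\omega_v)_{v\in\mathscr V}$, with $\omega_v\in\HH^1_{\log}(X_v/k^\times)\simeq\HH^1_{dR}(U_v/k)$ (via (\ref{diviso}) and the computation in Lemma \ref{expl}), subject to the residue condition $\Res_{X_e}(\omega_v)+\Res_{X_e}(\omega_w)=0$ for every $e=[v,w]$. The map $\varphi$ is induced by the inclusion of regular into log differentials on each component, i.e. by the natural maps $\HH^1_{dR}(X_v/k)\to\HH^1_{dR}(U_v/k)$; a regular form on the proper curve $X_v$ has vanishing residue at every $X_e$, so its image indeed lies in $\Ker\beta$. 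Unwinding the definition of $\tilde N$ against the filtration giving the middle row of (\ref{snakediag}), the element $\tilde N([((\omega_v),(f_e))])=[((0),(\Res_{X_e}(\omega_v))_e)]$ has trivial component in the top graded piece $\Ker\beta$, hence lands in the subspace $\Coker\alpha\subset\HH^1_{\log}(X/k^\times)$. Concretely $N'$ factors as $\iota\circ\pi\circ R$, where $R\colon\Ker\beta\to\bigoplus_{e}k$ is the residue map $(\omega_v)\mapsto(\Res_{X_e}(\omega_v))_{e=[v,w]}$, the map $\pi\colon\bigoplus_e k\to\Coker\alpha$ is the projection, and $\iota$ is the (injective) inclusion. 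With these identifications the three things to prove become: $\varphi$ is injective, $\IIm\varphi=\Ker R$, and $\Ker N'=R^{-1}(\IIm\alpha)$.

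The first two I would obtain from the residue (Gysin) exact sequence of the affine curve $U_v\subset X_v$, namely $0\to\HH^1_{dR}(X_v/k)\to\HH^1_{dR}(U_v/k)\overset{\Res}{\longrightarrow}\bigoplus_{e\in\mathscr E_v}k$ (cf. \cite{Deligne70}). This gives injectivity of each $\HH^1_{dR}(X_v/k)\to\HH^1_{dR}(U_v/k)$, hence of $\varphi$, and identifies the regular classes with the residue-zero classes; combined with the residue condition (which forces the end-point residues to vanish as well once the start-point ones do) this yields $\IIm\varphi=\Ker R$ inside $\Ker\beta$. In particular $N'\circ\varphi=\iota\circ\pi\circ R\circ\varphi=0$, so the sequence is a complex.

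For exactness at $\Ker\beta$ I must show $\Ker N'\subseteq\IIm\varphi$, i.e. $R^{-1}(\IIm\alpha)\subseteq\Ker R$; since the reverse inclusion $\Ker R\subseteq R^{-1}(\IIm\alpha)$ is automatic, the entire content is the equality, equivalent to $\IIm R\cap\IIm\alpha=0$. Two combinatorial inputs set this up. First, $\IIm\alpha$ is the space of graph coboundaries $\{(c_v-c_w)_{e=[v,w]}:(c_v)\in\bigoplus_v k\}$, because $\alpha$ is the difference of restrictions of constants. Second, for any $(\omega_v)\in\Ker\beta$ the residue theorem on each proper curve $X_v$ gives $\sum_{e\in\mathscr E_v}\Res_{X_e}(\omega_v)=0$; rewriting the incoming-edge residues via the residue condition turns this into the vanishing of the graph boundary of $(\Res_{X_e}(\omega_v))_e$, so $\IIm R$ is contained in the space of graph cycles. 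Thus everything reduces to showing that cycles and coboundaries meet only in $0$ inside $\bigoplus_e k$.

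The hard part will be precisely this last linear-algebraic statement, which is the combinatorial heart of the argument in the spirit of \cite{Chiarellotto16}. The delicate point is that $k$ need not be formally real: although cycles and coboundaries are orthogonal for the standard pairing $\langle x,y\rangle=\sum_e x_ey_e$ (as $\langle\text{cycle},\delta u\rangle=\langle\partial(\text{cycle}),u\rangle=0$), over a general characteristic-zero field this form is indefinite, so orthogonality alone does not rule out a nonzero isotropic intersection. I would resolve this via the graph Laplacian $L=\delta^{\mathsf T}\delta$: an element of the intersection has the shape $\delta u$ with $\delta^{\mathsf T}\delta u=Lu=0$. Since $L$ is an integer matrix, its rank is independent of the characteristic-zero field and equals its rank over $\R$, namely $|\mathscr V|$ minus the number of connected components of $\Gr(X)$; as this is exactly $\dim_k\IIm\delta$, and one always has $\Ker\delta\subseteq\Ker L$, comparing dimensions forces $\Ker_k L=\Ker_k\delta$. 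Hence $u\in\Ker\delta$, so $\delta u=0$ and the intersection is trivial. This gives $\IIm R\cap\IIm\alpha=0$ and completes the proof of exactness.
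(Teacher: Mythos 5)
Your argument is correct and follows essentially the same route as the paper's proof: the residue theorem on each proper component forces the residue tuple to satisfy the graph-cycle condition, the vanishing of $[(0,(a_e)_e)]$ in $\HH^1_{\log}(X/k^\times)$ is exactly the coboundary condition, the triviality of the intersection of cycles and coboundaries then kills all residues, and the Gysin sequence identifies $\IIm\varphi$ with the residue-zero classes while giving injectivity of $\varphi$. The only point of divergence is that where the paper simply cites \cite[Proposition 8]{Chiarellotto16} (Lemma \ref{comblemma}) for the combinatorial fact, you supply your own proof via the observation that the rank of the integer Laplacian matrix is the same over any field of characteristic zero as over $\R$ --- a correct, self-contained replacement that moreover dispenses with the connectedness hypothesis of the cited lemma.
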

\begin{proof}
We have showed that \(\HH^1_{\log}(X_v/k^\times)\simeq\HH^1_{\log}(X'_v/k)\simeq\HH^1_{dR}(U_v/k)\), so a class \([\omega_v]\) in \(\HH^1_{\log}(X_v/k^\times)\) will be represented by a section \(\omega_v\) in \(\HH^0(U_v,\Omega^1_{U_v/k})\). Then \(([\omega_v])_v\) is in \(\Ker\beta\) if and only if
\begin{equation}\label{sumreszero}
\Res_{X_e}(\omega_v)+\Res_{X_e}(\omega_w)=0,
\end{equation}
for any \(e=[v,w]\). We have
\begin{align*}
\Ker N'&=\{([\omega_v])_v \in \Ker\beta\mid[(0,\Res_{X_e}(\omega_v)_{e=[v,w]})]=0\text{ in }\HH^1_{\log}(X/k^\times)\}.
\end{align*}
We take a class \(([\omega_v])_v\) in \(\Ker N'\), the residue theorem (\cite[Ch.II, Prop. 6]{Serre88}) tells us that, for every vertex \(v\) we have
\begin{equation}\label{resthm}
\sum\limits_{e \in \mathscr E_v}\Res_{X_e}(\omega_v)=0.
\end{equation}
For each vertex \(v\), we put
\[
\mathscr E_v^{+}=\{e\in\mathscr E|A(e)=v\}\qquad\text{and}\qquad\mathscr E_v^{-}=\{e\in\mathscr E|B(e)=v\}.
\]
Given an edge \(e=[v,w]\), we define \(a_e=\Res_{X_e}(\omega_v)\), so we rewrite (\ref{resthm}) as
\begin{align}\label{condcomblemma}
\sum\limits_{e \in \mathscr E_v}\Res_{X_e}(\omega_v)&=\sum\limits_{e \in \mathscr E_v^+}\Res_{X_e}(\omega_v)+\sum\limits_{e \in \mathscr E_v^-}\Res_{X_e}(\omega_v)\\
&\overset{\mathclap{\text{(\ref{sumreszero})}}}{=}\sum\limits_{e \in \mathscr E_v^+}a_e-\sum\limits_{e \in \mathscr E_v^-}a_e=0 \nonumber.
\end{align}
On the other hand \(([\omega_v])_v\) is in \(\Ker N'\), so \([(0,a_e)]=0\) in \(\HH^1_{\log}(X/k^\times)\). This happens if and only if there exist constants \((a_v)_v\) in \(\oplus_{v\in \mathscr V} k\) such that \(a_v-a_w=a_e\), for \(e=[v,w]\). Since condition (\ref{condcomblemma}) holds, a combinatorial argument, used also in the \(p\)-adic setting (\cite[Proposition 8]{Chiarellotto16}, Lemma \ref{comblemma} below), shows that \(a_e=0\) for every \(e\). This proves
\begin{align*}
\Ker N'&=\{([\omega_v])_v \in \Ker\beta\mid\Res_{X_e}(\omega_v)=0\text{ for }e=[v,w]\}.
\end{align*}
Now, for any vertex \(v\) we have a Gysin exact sequence (\cite[III, §8.3]{Hartshorne70}) for the embeddings \(\coprod_{e\in \mathscr E_v} X_e \hookrightarrow X_v \hookleftarrow U_v\) as below:
\[
0 \longrightarrow \HH^1_{dR}(X_v/k)\overset{\varphi_v}{\longrightarrow}\HH^1_{dR}(U_v/k)\overset{\Res}{\longrightarrow}\bigoplus\limits_{e\in \mathscr E_v}\HH^0_{dR}(X_e/k)\longrightarrow \cdots.
\]
This proves that \((\varphi_v)_v\) factors through \(\varphi\), so we deduce injectivity of \(\varphi\) and that \(\IIm \varphi = \Ker N'\).
\end{proof}
\begin{lemma}\cite[Proposition 8]{Chiarellotto16}\label{comblemma}
Let \(\mathbb F\) be a field of characteristic \(0\) and \(G=(\mathscr V,\mathscr E)\) be a finite directed graph, connected and without loops. Define the linear morphism of vector spaces
\begin{align*}
\varphi:\oplus_{v\in \mathscr V}\mathbb F &\longrightarrow \oplus_{e\in \mathscr E}\mathbb F\\
(a_v)_v &\longmapsto (a_v-a_w)_{e=[v,w]}.
\end{align*}
Then
\[
\IIm(\varphi)\cap\{(a_e)_e\in \oplus_{e\in \mathscr E}\mathbb F  \mid \sum_{e \in \mathscr E_v^+}a_e-\sum_{e \in \mathscr E_v^-}a_e=0\}=0.
\]
\end{lemma}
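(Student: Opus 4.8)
The plan is to reinterpret the statement in the language of algebraic graph theory and then reduce the case of a general characteristic-$0$ field to the real case by a rationality argument. Let $B$ be the incidence matrix of the directed graph $G$, with rows indexed by $\mathscr V$ and columns by $\mathscr E$: for $e=[v,w]$ put $B_{v,e}=1$, $B_{w,e}=-1$, and all remaining entries $0$. Then $\varphi$ is exactly the transpose $B^{\mathrm t}$, so $\IIm(\varphi)$ is the coboundary (``cut'') space, while the constraint $\sum_{e\in\mathscr E_v^+}a_e-\sum_{e\in\mathscr E_v^-}a_e=0$ for every $v$ says precisely that $B\mathbf a=0$, i.e.\ that $\mathbf a$ lies in the cycle space $\Ker(B)$. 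Thus the assertion becomes the identity $\IIm(B^{\mathrm t})\cap\Ker(B)=0$.

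First I would record the one computation that drives everything. Let $\mathbf a=(a_e)$ lie in the intersection, and write $\mathbf a=B^{\mathrm t}\mathbf x$ with $\mathbf x=(a_v)_v$. Equipping $\oplus_{e}\mathbb F$ and $\oplus_v\mathbb F$ with their standard symmetric bilinear forms $\langle\cdot,\cdot\rangle$ and using the adjunction $\langle B^{\mathrm t}\mathbf x,\mathbf z\rangle=\langle\mathbf x,B\mathbf z\rangle$, one obtains
\[
\sum_{e\in\mathscr E}a_e^2=\langle\mathbf a,\mathbf a\rangle=\langle B^{\mathrm t}\mathbf x,\mathbf a\rangle=\langle\mathbf x,B\mathbf a\rangle=0 .
\]
Unwound, this is just summation by parts: each edge $e=[v,w]$ contributes $a_{A(e)}a_e$ to the outgoing sums and $a_{B(e)}a_e$ to the incoming sums, and $a_{A(e)}-a_{B(e)}=a_v-a_w=a_e$, so the vertexwise divergence condition collapses to $\sum_e a_e^2=0$.

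The main obstacle is exactly that $\sum_e a_e^2=0$ does \emph{not} force $\mathbf a=0$ over an arbitrary characteristic-$0$ field --- already over $\C$ one has $1^2+i^2=0$ --- so the usual real positivity argument is unavailable. I would bypass this by base change. The map $\varphi$ and the constraint subspace are cut out by matrices with entries in $\{0,\pm1\}$, hence are defined over the prime field $\Q$. Since $\dim(U\cap V)=\dim U+\dim V-\dim(U+V)$ and each term on the right is invariant under extension of scalars, the dimension of $W_{\mathbb F}:=\IIm(\varphi)\cap\{\text{constraint}\}$ over $\mathbb F$ equals the dimension of the analogous intersection $W_{\R}$ over $\R$. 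Over $\R$ the identity $\sum_e a_e^2=0$ gives $\mathbf a=0$ by positivity, whence $W_{\R}=0$; therefore $W_{\mathbb F}=0$ as well, which is the claim.

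As an alternative working directly over $\mathbb F$, one can argue through the graph Laplacian $L=BB^{\mathrm t}$: from $\mathbf a=B^{\mathrm t}\mathbf x\in\Ker(B)$ one gets $L\mathbf x=0$, and since $G$ is connected the Matrix-Tree theorem shows that in characteristic $0$ the matrix $L$ has rank $|\mathscr V|-1$ with kernel spanned by the constant vector (its principal minors count spanning trees, a positive integer, hence nonzero in $\mathbb F$); thus $\mathbf x$ is constant and $\mathbf a=B^{\mathrm t}\mathbf x=0$. Either route finishes the proof, the only genuinely delicate point being the passage from $\sum_e a_e^2=0$ to $\mathbf a=0$ in the absence of an ordering on $\mathbb F$.
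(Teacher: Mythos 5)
Your proof is correct. Note that the paper does not actually prove this lemma: it is quoted verbatim from \cite[Proposition 8]{Chiarellotto16} and used as a black box in the proof of Lemma \ref{redinvcyc}, so there is no in-text argument to compare against. Your reformulation is the right one: with $B$ the signed incidence matrix, $\varphi=B^{\mathrm t}$, the harmonicity constraint is $B\mathbf a=0$, and the claim is $\IIm(B^{\mathrm t})\cap\Ker(B)=0$; the adjunction computation $\sum_e a_e^2=\langle\mathbf x,B\mathbf a\rangle=0$ is exactly the standard cut-space/cycle-space orthogonality. You correctly identify the only delicate point, namely that $\sum_e a_e^2=0$ does not kill $\mathbf a$ over a general field of characteristic $0$, and both of your fixes are sound: the descent to $\Q$ is legitimate because $\IIm(\varphi)$, $\Ker(B)$ and their sum are cut out by integer matrices, so all dimensions in $\dim(U\cap V)=\dim U+\dim V-\dim(U+V)$ are ranks of rational matrices and hence unchanged under extension of scalars (equivalently, the intersection commutes with the flat base change $\Q\to\mathbb F$); and the Laplacian route works because the hypotheses ``connected, no loops'' guarantee via Matrix--Tree that $L=BB^{\mathrm t}$ has corank exactly $1$ in characteristic $0$, with kernel the constants, forcing $\mathbf x$ constant and $\mathbf a=B^{\mathrm t}\mathbf x=0$. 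The Laplacian argument is arguably the cleaner of the two since it never leaves $\mathbb F$ and makes visible where connectedness and $\Char\mathbb F=0$ are used (the number of spanning trees must be nonzero in $\mathbb F$). Either version would serve as a self-contained replacement for the external citation.
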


\section{Monodromy and Gauss-Manin connection}\label{mongm}
In this section we consider the same setting of the introduction. That is, \(S/\C\) is a smooth connected affine curve, \(\pi:\mathfrak X\to S\) a proper, separated morphism of finite type from a smooth scheme \(\mathfrak X/\C\). We assume that the fibers \(\mathfrak X_s\) are smooth curves when \(s\in S^*=S\setminus \{P\}\) and that \(\mathfrak X_P\) is a simple normal crossing divisor. By Ehresmann's Lemma, the restriction \(\pi':\mathfrak X'\to (S^*)^{an}\) is a locally trivial \(C^{\infty}\)-fibration (here \(\mathfrak X'\) denotes \((\mathfrak X\setminus \mathfrak X_P)^{an}\)). We take an open punctured disc \(\Delta^*\subset (S^*)^{an}\) around \(P\) and fix a point \(s\in \Delta^*\). The positive generator of \(\pi_1(\Delta^*,s)\simeq \Z\) acts on \(\HH^1(\mathfrak X_s^{an};\C)\) via a monodromy endomorphism that we will denote \(T_s\). This, a priori topological, operator has an algebraic description via the Gauss-Manin connection. We will review this construction using the language of log schemes.\\
We look at \(\mathfrak X\) with the divisorial log structure \(\mathcal M_{\mathfrak X}\) induced by \(\mathfrak X_P\), as defined in \cite[§1.5]{Kato89}, and denote by \(\mathfrak X^{\times}=(\mathfrak X,\mathcal M_{\mathfrak X})\) the log scheme obtained. Following the same convention, \(S^{\times}\) will be the log scheme with the divisorial log structure induced by \(P\). Then the morphism \(\pi\) underlies a log smooth morphism \(\pi^{\times}:\mathfrak X^{\times}\to S^{\times}\). Pulling back the log structures on \(\mathfrak X\) and \(S\), respectively to \(\mathfrak X_P\) and \(P\) we obtain log schemes \(\mathfrak X_P^{\times}\) and \(P^{\times}\). Then \(\mathfrak X_P^{\times}/P^{\times}\), is a log curve as in the hypotheses of §\ref{logcurve}, so we put \(\mathfrak X_P^{\times}=X\) and \(P^{\times}=k^{\times}\), with \(k=\C\). The setup is summarized by the following cartesian diagram of log schemes
\[
\begin{tikzcd}
X\arrow[r,"i"] \arrow[d,"f"'] &\mathfrak X^{\times} \arrow[d,"\pi^{\times}"]\\
k^{\times}\arrow[r,"i_P"]&S^{\times}\nospacecomma
\end{tikzcd}
\]
where the vertical arrows are log smooth. We have sheaves of log-de Rham differentials \(\omega^1_{{\mathfrak X}^{\times}/S^{\times}}\) and \(\omega^1_{X/k^{\times}}\simeq i^*(\omega^1_{\mathfrak X^{\times}/S^{\times}})\) that are locally free by log smoothness. Taking wedge products, one obtains the log-de Rham complexes \(\omega^{\bullet}_{{\mathfrak X}^{\times}/S^{\times}}\) and \(\omega^{\bullet}_{X/k^{\times}}\).
In contrast with the rest of the paper, in this section we consider log schemes (and consequently sheaves of log differentials) with respect to the Zariski topology. More precisely, if \((Z_{\acute et},\mathcal M_Z)\) is any of the log schemes introduced above and \(\varepsilon:Z_{\acute et}\to Z_{Zar}\) is the usual morphism of sites, we will look at the log schemes
\((Z_{Zar},\varepsilon _* \mathcal M_Z)\).
\begin{remark}\label{zaret}
Alternatively, one can start with the log scheme \((Z_{Zar},\mathcal M_{Z_{Zar}})\), where the divisorial log structure (resp. the pullback of the divisorial log structure) is defined on the Zariski site. This yields the same log scheme as the one taken above: \((Z_{Zar},\mathcal M_{Z_{Zar}})\simeq (Z_{Zar},\varepsilon _* \mathcal M_Z)\). Moreover, if \({\omega}^1_{Z_{Zar}}\) and \({\omega}^1_{Z_{\acute et}}\) denote the sheaves of differentials of the respective log schemes, then \({\omega}^1_{Z_{Zar}}\simeq \varepsilon_*{\omega}^1_{Z_{\acute et}}\).
\end{remark}
For the rest of the section, We will omit the subscript ``\(Zar\)'' and the pushforward ``\(\varepsilon_*\)'' to simplify the exposition. Without further assumptions, all sheaves are intended on the Zariski site.
\begin{remark} As in \cite[§0]{Illusie94a}, we stress the fact that, after analytification, the complex \(\omega^{\bullet}_{X/k^{\times}}\) is the complex of sheaves studied in \cite{Steenbrink76} to compute the limit Hodge structure.
\end{remark}
The absolute log-de Rham complex for \(\mathfrak X^{\times}\) fits into a short exact sequences of complexes of sheaves (cf. \cite[Proposition 3.12]{Kato89})
\begin{equation}\label{sesfam}
0\longrightarrow\pi^* \omega^1_{S^{\times}}\otimes_{\mathcal O_{\mathfrak X}}\omega^{\bullet-1}_{{\mathfrak X}^{\times}/S^{\times}}\overset{\wedge}{\longrightarrow}\omega^{\bullet}_{{\mathfrak X}^{\times}}\longrightarrow\omega^{\bullet}_{{\mathfrak X}^{\times}/S^{\times}}\longrightarrow 0,
\end{equation}
where the first non-zero arrow is \(\eta\otimes \eta'\mapsto \eta \wedge \eta'\).\\ 
The exact sequence induces a morphism in \(D^b(\mathfrak X)\), the bounded derived category of complexes of abelian sheaves,
\[
\omega^{\bullet}_{{\mathfrak X}^{\times}/S^{\times}}\longrightarrow\pi^*\omega^1_{S^{\times}}\otimes_{\mathcal O_{\mathfrak X}}\omega^{\bullet-1}_{{\mathfrak X}^{\times}/S^{\times}}[1].
\]
After changing signs to make the differentials agree (cf. §\ref{conv}), i.e. composing with  \((-1)^i{\id}:\pi^*\omega^1_{S^{\times}}\otimes_{\mathcal O_{\mathfrak X}}\omega^{\bullet-1}_{{\mathfrak X}^{\times}/S^{\times}}[1]\to\pi^*\omega^1_{S^{\times}}\otimes_{\mathcal O_{\mathfrak X}}\omega^{\bullet}_{{\mathfrak X}^{\times}/S^{\times}}\), we get
\begin{equation}\label{connmorph}
\delta:\omega^{\bullet}_{{\mathfrak X}^{\times}/S^{\times}}\longrightarrow\pi^*\omega^1_{S^{\times}}\otimes_{\mathcal O_{\mathfrak X}}\omega^{\bullet}_{{\mathfrak X}^{\times}/S^{\times}}.
\end{equation}
We apply the derived pushforward \(\mathbb R^i\pi_*\) and obtain the Gauss-Manin connection
\[
\nabla:\mathbb R^i\pi_*\omega^{\bullet}_{{\mathfrak X}^{\times}/S^{\times}}\longrightarrow\mathbb R^i\pi_*(\pi^*\omega^1_{S^{\times}}\otimes_{\mathcal O_{\mathfrak X}}\omega^{\bullet}_{{\mathfrak X}^{\times}/S^{\times}})\simeq\omega^1_{S^{\times}}\otimes_{\mathcal O_S}\mathbb R^i\pi_*\omega^{\bullet}_{{\mathfrak X}^{\times}/S^{\times}},
\]
where the isomorphism is the projection formula for complexes of \(\pi^{-1}\mathcal O_S\)-modules (cf. \cite[5.4.10]{EGAI}). We have a residue at \(P\) morphism
\begin{align*}
\Res_{P}:\omega^1_{S^{\times}}&\longrightarrow \mathcal O_S\\
f\frac{dt}{t}&\longmapsto f,
\end{align*}
the residue of the Gauss-Manin connection at \(P\) is obtained from the composition
\[
\mathbb R^i\pi_*\omega^{\bullet}_{{\mathfrak X}^{\times}/S^{\times}}\overset{\nabla}{\longrightarrow}\omega^1_{S^{\times}}\otimes_{\mathcal O_S}\mathbb R^i\pi_*\omega^{\bullet}_{{\mathfrak X}^{\times}/S^{\times}}\overset{\Res_P\otimes \id}{\longrightarrow} \mathcal O_{S}\otimes_{\mathcal O_S}\mathbb R^i\pi_*\omega^{\bullet}_{{\mathfrak X}^{\times}/S^{\times}}
\]
and taking the fiber
\[
\Res_P\nabla:i_{P}^*\R^i \pi_*\omega^{\bullet}_{{\mathfrak X}^{\times}/S^{\times}}\longrightarrow i_{P}^*\R^i \pi_*\omega^{\bullet}_{{\mathfrak X}^{\times}/S^{\times}}.
\]
\begin{remark}\label{derivedgm}
If one considers the composition of morphisms in the derived category
\[
\omega^{\bullet}_{{\mathfrak X}^{\times}/S^{\times}}\overset{\delta}{\longrightarrow}\pi^*\omega^1_{S^{\times}}\otimes_{\mathcal O_{\mathfrak X}}\omega^{\bullet}_{{\mathfrak X}^{\times}/S^{\times}}\overset{\pi^*\Res_P\otimes \id}{\longrightarrow}\pi^*\mathcal O_{S}\otimes_{\mathcal O_{\mathfrak X}}\omega^{\bullet}_{{\mathfrak X}^{\times}/S^{\times}},
\]
then \(\Res_P(\nabla)=i_P^*\R^i\pi_*((\pi^*\Res_P\otimes \id)\circ \delta)\). Indeed, using the projection formula, the residue commutes with the derived pushforward.
\end{remark}
We collect here the results showing that the residue of the Gauss-Manin connection is a fiber of an automorphism of \(\R^i \pi_*\omega^{\bullet}_{{\mathfrak X}^{\times}/S^{\times}}\), the others being the monodromy operators on the smooth fibers. We first present an algebraic version of \cite[Theorem 2.18]{Steenbrink76}, one can prove it as in the original account taking analytifications and comparing cohomologies.
\begin{proposition}\label{locfreesten}
For all integers \(i\), the sheaf \(\R^i \pi_*(\omega^{\bullet}_{{\mathfrak X}^{\times}/S^{\times}})\) is locally free and the natural map
\begin{equation}\label{Steenbrinkiso}
i_{P}^*\R^i \pi_*(\omega^{\bullet}_{{\mathfrak X}^{\times}/S^{\times}})\longrightarrow \HHH^i(X,\omega^{\bullet}_{X/k^{\times}})
\end{equation}
is an isomorphism.
\end{proposition}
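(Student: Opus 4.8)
The plan is to reduce the statement to its analytic counterpart, \cite[Theorem 2.18]{Steenbrink76}, via analytification together with the comparison between algebraic and analytic (hyper)cohomology. Both assertions --- local freeness of \(\R^i\pi_*\omega^{\bullet}_{{\mathfrak X}^{\times}/S^{\times}}\) and bijectivity of the base-change map (\ref{Steenbrinkiso}) --- are local on \(S\), so it suffices to work in a neighbourhood of \(P\). Away from \(P\) the log structures are trivial, \(\omega^{\bullet}_{{\mathfrak X}^{\times}/S^{\times}}\) restricts to the ordinary relative de Rham complex \(\Omega^{\bullet}_{\mathfrak X/S}\), and \(\pi\) is smooth and proper; there both properties are the standard features of the relative algebraic de Rham cohomology of a smooth proper morphism (the Gauss--Manin bundle). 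Hence all the content is concentrated at \(P\).

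First I would invoke relative GAGA. By log smoothness the terms of \(\omega^{\bullet}_{{\mathfrak X}^{\times}/S^{\times}}\) are locally free of finite rank, so this is a bounded complex of coherent \(\mathcal O_{\mathfrak X}\)-modules; since \(\pi\) is proper, analytification commutes with the hyper-derived pushforward, yielding a canonical isomorphism
\[
\left(\R^i\pi_*\omega^{\bullet}_{{\mathfrak X}^{\times}/S^{\times}}\right)^{an}\simeq \R^i\pi^{an}_*\left(\omega^{\bullet}_{{\mathfrak X}^{\times}/S^{\times}}\right)^{an}
\]
of coherent sheaves on \(S^{an}\). Restricting to a small disc \(\Delta\subset S^{an}\) around \(P\), the analytified complex \((\omega^{\bullet}_{{\mathfrak X}^{\times}/S^{\times}})^{an}\) is exactly the complex studied by Steenbrink (cf. \cite{Illusie94a,Steenbrink76}), so \cite[Theorem 2.18]{Steenbrink76} applies: the sheaves \(\R^i\pi^{an}_*(\omega^{\bullet}_{{\mathfrak X}^{\times}/S^{\times}})^{an}\) are locally free on \(\Delta\), and the fibre at \(P\) of the analytic base-change map is an isomorphism onto \(\HHH^i(X^{an},(\omega^{\bullet}_{X/k^{\times}})^{an})\).

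It then remains to descend these two facts to the algebraic side. For local freeness, \(\R^i\pi_*\omega^{\bullet}_{{\mathfrak X}^{\times}/S^{\times}}\) is coherent on the smooth curve \(S\), so freeness of its stalk at \(P\) (a module over the discrete valuation ring \(\mathcal O_{S,P}\)) can be tested after completion; since \(\widehat{\mathcal O_{S,P}}\simeq\widehat{\mathcal O_{S^{an},P}}\) and the completed stalk agrees with that of the analytic sheaf, which is free by the previous step, the algebraic sheaf is locally free. For the base-change isomorphism, note that the source \(i_P^*\R^i\pi_*\omega^{\bullet}_{{\mathfrak X}^{\times}/S^{\times}}\) and the target \(\HHH^i(X,\omega^{\bullet}_{X/k^{\times}})\) are finite-dimensional \(\C\)-vector spaces. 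I would assemble the square relating (\ref{Steenbrinkiso}) to its analytic analogue, using the isomorphism displayed above (compatibility of \(i_P^*\) with analytification at the \(\C\)-point \(P\)) on the source and the GAGA comparison \(\HHH^i(X,\omega^{\bullet}_{X/k^{\times}})\simeq \HHH^i(X^{an},(\omega^{\bullet}_{X/k^{\times}})^{an})\) for the proper \(\C\)-scheme \(X\) on the target. Under these identifications the algebraic map equals the analytic one, which is an isomorphism, so (\ref{Steenbrinkiso}) is an isomorphism as well.

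The main obstacle is bookkeeping rather than a new idea: one must confirm that relative GAGA holds for the hyper-derived pushforward of this complex of coherent sheaves, and that the two analytification comparisons --- for \({\mathfrak X}^{\times}/S^{\times}\) and for the proper fibre \(X\) --- are compatible, so that the base-change square genuinely commutes and matches Steenbrink's conventions for the log structure and the nearby fibre. Once these compatibilities are verified, both local freeness and the isomorphism (\ref{Steenbrinkiso}) follow formally from the analytic case.
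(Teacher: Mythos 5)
Your argument is correct, and it is essentially the route the paper itself gestures at in the sentence preceding the statement (``one can prove it as in the original account taking analytifications and comparing cohomologies''), but the proof actually written in the paper transfers less across GAGA. There, GAGA is applied only fibrewise, to identify \(\HHH^i(\mathfrak X_s, i_s^{*}\omega^{\bullet}_{\mathfrak X^{\times}/S^{\times}})\) with its analytic counterpart for each closed point \(s\); Steenbrink's Propositions 2.2 and 2.16 then show that \(s\mapsto \dim_{\C}\HHH^i(\mathfrak X_s, i_s^{*}\omega^{\bullet}_{\mathfrak X^{\times}/S^{\times}})\) is constant, and the conclusion (local freeness plus base change) is drawn purely algebraically from Grauert's cohomology-and-base-change criterion (\cite[Corollary 2, p.~50]{Mumford08}) applied over the reduced curve \(S\). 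You instead import Steenbrink's Theorem 2.18 wholesale via relative GAGA for the proper morphism \(\pi\), and then descend: local freeness by faithful flatness of \(\mathcal O_{S^{an},P}\) over the discrete valuation ring \(\mathcal O_{S,P}\) (freeness of a finite module over a DVR is torsion-freeness, which passes along a faithfully flat extension), and the base-change isomorphism by the commutative square whose verticals are the relative comparison at \(P\) and absolute GAGA for the proper fibre \(X\). Both reductions are sound and rest on the same analytic input; the trade-off is that the paper's route avoids checking relative GAGA for the hyper-derived pushforward and the compatibility of the two comparison maps (precisely the bookkeeping you flag as the remaining obstacle), at the price of invoking the algebraic semicontinuity/base-change machinery, which for a complex rather than a single sheaf also requires a word of justification.
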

\begin{proof}
For any point \(s\) in \(S\), let \(i_s:\mathfrak X_s\hookrightarrow \mathfrak X\) denote the embedding of the respective fiber. For any integer \(i\) we look at the function
\begin{equation}\label{constdim}
s\longmapsto \dim_{\C}\HHH^i(\mathfrak X_s, i_s^{*}\omega^{\bullet}_{\mathfrak X^{\times}/S^{\times}}).
\end{equation}
Via GAGA correspondence we have
\[
\HHH^i(\mathfrak X_s, i_s^{*}\omega^{\bullet}_{\mathfrak X^{\times}/S^{\times}})\simeq\HHH^i(\mathfrak X^{an}_s, (i_s^{*}\omega^{\bullet}_{\mathfrak X^{\times}/S^{\times}})^{an}).
\]
So using \cite[propositions 2.2 and 2.16]{Steenbrink76}, we deduce that the function in (\ref{constdim}) is constant. We can then apply \cite[Corollary 2, p. 50]{Mumford08} and conclude.
\end{proof}
We define
\[
N=\theta\circ\Res_P\nabla\circ\theta^{-1}
\]
where \(\theta\) is the isomorphims in (\ref{Steenbrinkiso}). We take the short exact sequence for relative log differentials on \(X\) analogue of (\ref{sesfam}), obtained from this by pullback,
\begin{equation}\label{sesfib}
0\longrightarrow f^* \omega^1_{k^{\times}}\otimes_{\mathcal O_{X}}\omega^{\bullet-1}_{X/k^{\times}}\overset{\wedge}{\longrightarrow}\omega^{\bullet}_{X}\longrightarrow\omega^{\bullet}_{X/k^{\times}}\longrightarrow 0
\end{equation}
and so, as for (\ref{connmorph}), we obtain a connecting morphism in \(D^b(X)\), the derived category of complexes of abelian sheaves on \(X\),
\begin{equation}\label{connmorphpartial}
\partial:\omega^{\bullet}_{X/k^{\times}}\longrightarrow f^*\omega^1_{k^{\times}}\otimes_{\mathcal O_{X}}\omega^{\bullet}_{X/k^{\times}}\simeq \omega^{\bullet}_{X/k^{\times}}.
\end{equation}
We show that this connecting morphism computes \(N\) (see also \cite[proof of Proposition 2.20]{Steenbrink76} and \cite[Proposition 11.15]{Peters08}).
\begin{proposition}\label{stenprop}
The endomorphism \(N\) on \(\HHH^1(X,\omega^{\bullet}_{X/k^{\times}})\) is the map induced by \(\partial\) in hypercohomology.
\end{proposition}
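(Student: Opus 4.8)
The plan is to reduce the statement to a compatibility between the family-level construction of \(N\) and the fibre-level morphism \(\partial\), mediated by the base-change isomorphism \(\theta\) of Proposition \ref{locfreesten}. By Remark \ref{derivedgm}, write \(\gamma\) for the composite in \(D^b(\mathfrak X^{\times})\)
\[
\gamma\colon\omega^{\bullet}_{\mathfrak X^{\times}/S^{\times}}\overset{\delta}{\longrightarrow}\pi^*\omega^1_{S^{\times}}\otimes_{\mathcal O_{\mathfrak X}}\omega^{\bullet}_{\mathfrak X^{\times}/S^{\times}}\overset{\pi^*\Res_P\otimes\id}{\longrightarrow}\pi^*\mathcal O_S\otimes_{\mathcal O_{\mathfrak X}}\omega^{\bullet}_{\mathfrak X^{\times}/S^{\times}}\simeq\omega^{\bullet}_{\mathfrak X^{\times}/S^{\times}},
\]
so that \(\Res_P\nabla=i_P^*\R^1\pi_*(\gamma)\) and hence \(N=\theta\circ i_P^*\R^1\pi_*(\gamma)\circ\theta^{-1}\). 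Since \(\partial\) is likewise, by (\ref{connmorphpartial}), the composite of the connecting morphism of (\ref{sesfib}) with the trivialization induced by \(\Res_P\), I would prove the proposition in two steps: first that \(i^*\gamma=\partial\) in \(D^b(X)\), where \(i\colon X\hookrightarrow\mathfrak X^{\times}\), and then that \(\theta\) intertwines \(i_P^*\R^1\pi_*(\gamma)\) with the map \(\R^1 f_*(i^*\gamma)=\HHH^1(X,i^*\gamma)\) induced in hypercohomology.

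For the first step I would observe that the short exact sequence (\ref{sesfib}) is exactly the pullback \(i^*\) of (\ref{sesfam}): each term of (\ref{sesfam}) is locally free (by log smoothness of \(\pi^{\times}\)), the sequence is termwise locally split, so \(i^*\) preserves its exactness and recovers (\ref{sesfib}) after the identifications \(i^*\omega^{\bullet}_{\mathfrak X^{\times}/S^{\times}}\simeq\omega^{\bullet}_{X/k^{\times}}\) and \(i^*\pi^*\omega^1_{S^{\times}}\simeq f^*i_P^*\omega^1_{S^{\times}}\simeq f^*\omega^1_{k^{\times}}\) coming from the cartesian square \(\pi^{\times}\circ i=i_P\circ f\). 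The connecting morphism in the derived category is constructed functorially from the associated mapping cone, and because all the sheaves in sight are flat (\(\mathbb L i^*=i^*\)) this construction commutes with \(i^*\); hence \(i^*\delta\) is the connecting morphism \(\delta_{\mathrm{fib}}\) of (\ref{sesfib}). It then remains to match the two residue trivializations: the restriction \(i_P^*\Res_P\colon\omega^1_{k^{\times}}\to\mathcal O_{\Spec k}=k\) of \(\Res_P\colon\omega^1_{S^{\times}}\to\mathcal O_S\) sends the generator of \(\omega^1_{k^{\times}}\) to \(1\), which is precisely the isomorphism used in (\ref{connmorphpartial}). Combining these, \(i^*\gamma=(f^*(i_P^*\Res_P)\otimes\id)\circ\delta_{\mathrm{fib}}=\partial\).

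For the second step, Proposition \ref{locfreesten} supplies both the local freeness of \(\R^1\pi_*\omega^{\bullet}_{\mathfrak X^{\times}/S^{\times}}\) and the fact that \(\theta\) is the base-change isomorphism \(i_P^*\R^1\pi_*\omega^{\bullet}_{\mathfrak X^{\times}/S^{\times}}\xrightarrow{\sim}\HHH^1(X,\omega^{\bullet}_{X/k^{\times}})=\R^1 f_*(i^*\omega^{\bullet}_{\mathfrak X^{\times}/S^{\times}})\). By naturality of the base-change morphism with respect to the endomorphism \(\gamma\) (resp. \(i^*\gamma\)), the square relating \(i_P^*\R^1\pi_*(\gamma)\) and \(\R^1 f_*(i^*\gamma)\) through \(\theta\) commutes, so that \(\theta\circ i_P^*\R^1\pi_*(\gamma)=\R^1 f_*(i^*\gamma)\circ\theta\). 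Using \(i^*\gamma=\partial\) from the first step, this gives
\[
N=\theta\circ i_P^*\R^1\pi_*(\gamma)\circ\theta^{-1}=\R^1 f_*(\partial)=\HHH^1(X,\partial),
\]
which is the claimed identity.

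The main obstacle I anticipate is the second step, namely establishing the naturality of \(\theta\) with respect to \(\gamma\): since \(\gamma\) is a genuine derived-category morphism (built from a connecting map), naturality cannot be read off directly from a map of complexes, and one must fix explicit representatives — realizing \(\delta\) and \(\delta_{\mathrm{fib}}\) through the mapping cones of the inclusions in (\ref{sesfam}) and (\ref{sesfib}) — and check that \(i^*\) carries one roof to the other, which again relies on flatness of all the terms so that \(i^*\) commutes with the cone and preserves the relevant quasi-isomorphisms. A secondary point to watch is the sign normalization of §\ref{conv} introduced in passing from (\ref{sesfam})--(\ref{sesfib}) to \(\delta\) and \(\partial\); one verifies that the same sign \((-1)^i\) is applied at both levels, so it is preserved by \(i^*\) and does not disturb the identification \(i^*\gamma=\partial\).
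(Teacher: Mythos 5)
Your argument is correct and follows essentially the same route as the paper: identify the fibrewise connecting morphism $\partial$ with the pullback $i^*$ of the family-level composite $(\pi^*\Res_P\otimes\id)\circ\delta$, then use the base-change transformation $i_P^*\R^1\pi_*\to\R^1 f_*$ together with Proposition~\ref{locfreesten} (which makes $\theta$ an isomorphism) and Remark~\ref{derivedgm} to conclude. You merely spell out in more detail the two points the paper leaves implicit, namely why $i^*$ commutes with the connecting morphism (flatness of the terms of (\ref{sesfam})) and the matching of the residue trivializations and signs.
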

\begin{proof}
Let us consider the composition
\[
\omega^{\bullet}_{{\mathfrak X}^{\times}/S^{\times}}\overset{\delta}{\longrightarrow}\pi^*\omega^1_{S^{\times}}\otimes_{\mathcal O_{\mathfrak X}}\omega^{\bullet}_{{\mathfrak X}^{\times}/S^{\times}}\overset{\pi^*\Res_P\otimes \id}{\longrightarrow}\pi^*\mathcal O_{S}\otimes_{\mathcal O_{\mathfrak X}}\omega^{\bullet}_{{\mathfrak X}^{\times}/S^{\times}},
\]
its index-wise pullback via \(i:X\hookrightarrow \mathfrak X\) is a morphism in the derived category, indeed it is \(\partial:\omega^{\bullet}_{X/k^{\times}}\to  \omega^{\bullet}_{X/k^{\times}}\). So the morphism induced in hypercohomology by \(\partial\) is \(\R^1f_*(\partial)=\R^1f_*i^*((\pi^*\Res_P\otimes \id)\circ \delta)\). We recall that we have a natural transformation \(i_P^*\R^1\pi_*\to\R^1f_*\) and so a commutative diagram
\[
\begin{tikzcd}
i_P^*\R^1\pi_*\omega^{\bullet}_{{\mathfrak X}^{\times}/S^{\times}}\arrow[r,"i_P^*\R^i\pi_*((\pi^*\Res_P\otimes \id)\circ \delta)"] \arrow[d,"\theta"'] &[8em]i_P^*\R^1\pi_*\omega^{\bullet}_{{\mathfrak X}^{\times}/S^{\times}}\arrow[d,"\theta"]\\
\R^1f_*i^*\omega^{\bullet}_{{\mathfrak X}^{\times}/S^{\times}}\arrow[r,"\R^1f_*i^*((\pi^*\Res_P\otimes \id)\circ \delta)"] &\R^1f_*i^*\omega^{\bullet}_{{\mathfrak X}^{\times}/S^{\times}}\nospaceperiod
\end{tikzcd}
\]
Proposition \ref{locfreesten} shows that the vertical arrows are isomorphisms, so the horizontal arrows have to agree, by Remark \ref{derivedgm} we conclude.
\end{proof}
\begin{remark}\label{ev0}
As in \cite[Proposition 2.20]{Steenbrink76}, a local calculation (for the \'etale topology or analytically via GAGA), shows that \(\Res_P \nabla\) has only \(0\) as eigenvalue (we are in the reduced case). Alternatively, one can deduce the nilpotency of \(\Res_P \nabla\) from Theorem \ref{moncomp} that is independent from this result.
\end{remark}
We could have defined the Gauss-Manin connection and its residue analytically, essentially in the same way, obtaining \(((\mathbb R^i\pi_*\omega^{\bullet}_{{\mathfrak X}^{\times}/S^{\times}})^{an},\nabla^{an})\). We choose a small open disc \(\Delta\) arounf \(P\), then the restriction \(((\mathbb R^i\pi_*\omega^{\bullet}_{{\mathfrak X}^{\times}/S^{\times}})^{an}\mid_{\Delta},\nabla^{an}\mid_{\Delta})\) is a logarithmic extension of the Gauss-Manin connection \((\R^i {\pi'}_*(\Omega^{\bullet}_{{\mathfrak X'}/\Delta^*}),\nabla))\), notation as in the beginning of the section. Thanks to Remark \ref{ev0}, we are in the hypotheses of \cite[Th\'eor\`eme II.1.17 and Corollaire II.5.6]{Deligne70} so we deduce the following corollary.
\begin{corollary}
The monodromy endomorphisms \(T_s\) on \(\HH^1(\mathfrak X_s^{an};\C)\simeq \HH^1_{dR}(\mathfrak X_s/\C)\), defined above, are the fibers of an automorphism \(T\) of the locally free sheaf \(\R^1 \pi_*(\omega^{\bullet}_{{\mathfrak X}^{\times}/S^{\times}})\) whose fiber in \(P\) is
\[
T_P=\exp(-2\pi i \Res_P(\nabla)).
\]
If one fixes an identification
\begin{equation}
\HH^1(\mathfrak X_s^{an};\C)\simeq \HH^1_{dR}(\mathfrak X_s/\C)\overset{\sim}{\longrightarrow}\HHH^1(X,\omega^\bullet_{X/k^{\times}}),
\end{equation}
usinig Proposition \ref{locfreesten}, then \(T_P\) and \(T_s\) are conjugate.
\end{corollary}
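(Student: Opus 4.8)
The plan is to deduce the statement from Deligne's comparison between the monodromy of a regular singular connection and the residue of its canonical logarithmic extension (\cite[Th\'eor\`eme II.1.17, Corollaire II.5.6]{Deligne70}), applied to the Gauss-Manin connection $\nabla$ on $M:=\R^1\pi_*(\omega^\bullet_{\mathfrak X^\times/S^\times})$.

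First I would pass to the analytic category and restrict to the small disc $\Delta$ around $P$. By Proposition \ref{locfreesten} the sheaf $M$ is locally free, so $M^{an}|_\Delta$ is a coherent extension across $P$ of the analytic Gauss-Manin connection $(\R^1\pi'_*(\Omega^\bullet_{\mathfrak X'/\Delta^*}),\nabla)$ on $\Delta^*$; since this connection has a logarithmic pole along $P$, the point $P$ is a regular singularity. Under the Riemann-Hilbert correspondence the connection on $\Delta^*$ corresponds to the local system with stalk $\HH^1(\mathfrak X_s^{an};\C)$ and monodromy $T_s$, by the very definition of $T_s$.

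The crux is to identify $M^{an}|_\Delta$ with Deligne's canonical extension, which \cite[Th\'eor\`eme II.1.17]{Deligne70} characterises as the unique logarithmic extension whose residue has eigenvalues in the fundamental domain $[0,1)$ for $\Z$-translation. By Remark \ref{ev0} the residue $\Res_P\nabla$ is nilpotent, so its only eigenvalue is $0\in[0,1)$, and hence $M^{an}|_\Delta$ is indeed the canonical extension. I expect this to be the main point of the argument: it is exactly here that the geometric input --- reducedness of the special fibre, encoded in the nilpotency of the residue via Remark \ref{ev0} --- is used to pin down the correct extension, so that the abstract machinery becomes applicable.

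Granting this, \cite[Corollaire II.5.6]{Deligne70} applies. The monodromy commutes with itself and therefore defines a flat $\mathcal O_{\Delta^*}$-linear automorphism of the connection on $\Delta^*$; by functoriality of the canonical extension this automorphism extends to an automorphism of $M^{an}|_\Delta$, and by GAGA to an automorphism $T$ of the locally free sheaf $M$ whose fibre over a point of $\Delta^*$ recovers $T_s$. Deligne's formula then computes the fibre of $T$ at $P$ as $\exp(-2\pi i\,\Res_P\nabla)$, which is $T_P$. For the conjugacy, I would use the same comparison: since $\Res_P\nabla$ is nilpotent its eigenvalues are non-resonant, so $(M^{an},\nabla^{an})$ is gauge-equivalent on $\Delta^*$ to the model connection $d+\Res_P\nabla\,\tfrac{dz}{z}$ on the fibre $i_P^*M^{an}$, whose monodromy is precisely $\exp(-2\pi i\,\Res_P\nabla)=T_P$. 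Under the identification of Proposition \ref{locfreesten} recorded in the statement, $T_s$ is thereby conjugate to $T_P$, as claimed.
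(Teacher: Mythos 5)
Your proposal is correct and follows essentially the same route as the paper: the paper's (very brief) justification is precisely to restrict the analytic Gauss--Manin connection to a small disc around $P$, observe via Remark \ref{ev0} that the residue is nilpotent so that the logarithmic extension is Deligne's canonical one, and then invoke \cite[Th\'eor\`eme II.1.17, Corollaire II.5.6]{Deligne70}. Your write-up merely makes explicit the steps the paper leaves implicit (identification with the canonical extension, extension of the monodromy automorphism across $P$), and correctly isolates the nilpotency of $\Res_P\nabla$ as the key geometric input.
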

In view of the previous corollary, the operator \(N\) is the classical nilpotent monodromy endomorphism viewed on \(\HHH^1(X,\omega^{\bullet}_{X/k^{\times}})\).

\section{Comparison of the monodromies}\label{comp}
In the previous section, we have described the classical monodromy operator \(N\) acting on \(\HHH^1(X_{Zar},\omega^{\bullet}_{{X_{Zar}}/{k^{\times}}_{Zar}})\), where the complex of log differentials was constructed using the Zariski topology. Now we come back at considering sheaves with respect to the \'etale topology, so that \(\omega^1_{X/k^{\times}}\) will denote the sheaf of log differentials on the \'etale site, as in §\ref{res}. Recall that, from Remark \ref{zaret}, we have
\[
\omega^{\bullet}_{{X_{Zar}}/{k^{\times}}_{Zar}}\simeq \varepsilon_*\omega^\bullet_{X/k^{\times}},
\]
for \(\varepsilon :X_{\acute et}\to X_{Zar}\) the morphism of sites. In order to compare the monodromy \(N\) with the one constructed in \ref{mon}, we will show that the natural map
\[
\HHH^p(X_{Zar},\varepsilon_*\omega^\bullet_{X/k^{\times}})\longrightarrow \HHH^p(X_{\acute et},\omega^{\bullet}_{X/k^{\times}})=\HH^p_{\log}(X/k^{\times})
\]
is an isomorphism. Since the cohomologies above are the abutments of the spectral sequence associated to the stupid filtration on the complexes, it is sufficient to show that
\[
\HHH^p(X_{Zar},\varepsilon_*\omega^q_{X/k^{\times}})\overset{\sim}{\longrightarrow} \HHH^p(X_{\acute et},\omega^{q}_{X/k^{\times}}).
\]
One can prove this for any quasi-coherent sheaf \(\mathcal F\) on \(X_{\acute et}\) using the Leray spectral sequence
\[
\HHH^p(X_{Zar},\R^q\varepsilon_* \mathcal F)\Longrightarrow \HHH^{p+q}(X_{\acute et},\mathcal F).
\]
Indeed, computing the sections of \(\R^q\varepsilon_* \mathcal F\) on affine open sets, shows \(\R^q \varepsilon_* \mathcal F=0\) for \(q>0\) when \(\mathcal F\) is quasi-coherent (see \cite[Ch. VII, 4.2.2]{SGA4_2} for the details).\\
In light of this, we keep calling \(N\) the monodromy on \(\HH^1_{\log}(X/k^{\times})\). Proposition \ref{stenprop} implies that \(N\) is obtained from the connecting morphism in the short exact sequence
\begin{equation*}
0\longrightarrow f^* \omega^1_{k^{\times}}\otimes_{\mathcal O_{X}}\omega^{\bullet-1}_{X/k^{\times}}\overset{\wedge}{\longrightarrow}\omega^{\bullet}_{X}\longrightarrow\omega^{\bullet}_{X/k^{\times}}\longrightarrow 0,
\end{equation*}
the analogue of (\ref{sesfib}) on the \'etale site. In section §\ref{mon}, we have introduced also a combinatorial monodromy \(\tilde{N}\) on \(\HH^1_{\log}(X/k^{\times})\). The rest of this paper is devoted to show that the combinatorial monodromy \(\tilde{N}\) and the classical monodromy \(N\) coincide. Namely, we prove the following.
\begin{theorem}\label{moncomp}
With the previous notation, \(\tilde{N}=N\).
\end{theorem}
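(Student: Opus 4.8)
The plan is to reduce everything to an explicit computation of the connecting morphism $\partial$ on the resolution $\Tot(\mathcal A_{X/k^\times})$ of §\ref{res}. By Proposition \ref{stenprop}, together with the étale–Zariski comparison established at the beginning of this section, $N$ is the endomorphism of $\HH^1_{\log}(X/k^\times)$ induced by the connecting morphism
\[
\partial:\omega^\bullet_{X/k^\times}\longrightarrow f^*\omega^1_{k^\times}\otimes_{\mathcal O_X}\omega^\bullet_{X/k^\times}\simeq\omega^\bullet_{X/k^\times}
\]
of the étale short exact sequence (\ref{sesfib}), the last isomorphism being $d\log t\otimes\alpha\mapsto\alpha$. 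Since $\tilde N$ is described combinatorially through Lemma \ref{expl}, the task is to transport $\partial$ onto the double complex $\mathcal A_{X/k^\times}$ and read it off in those terms.

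First I would build the absolute analogue $\mathcal A_X$ of $\mathcal A_{X/k^\times}$, replacing each relative complex $\omega^\bullet_{X_v/k^\times}$, $\omega^\bullet_{X_e/k^\times}$ by its absolute counterpart over $k$. The étale-local calculation of Lemma \ref{mv} applies verbatim and shows that $\Tot(\mathcal A_X)$ resolves $\omega^\bullet_X$, so (\ref{sesfib}) is resolved by a short exact sequence of double complexes whose sub-object consists of the $d\log t$-multiples and whose quotient is $\Tot(\mathcal A_{X/k^\times})$. Dropping the $d\log t$-component gives a splitting $s$ of graded $\mathcal O$-modules of $\mathcal A_X\to\mathcal A_{X/k^\times}$ (sending $a\,d\log x\mapsto a\,d\log x$ near a node and $g\,dz\mapsto g\,dz$ at a smooth point), and $\partial$ is then represented, up to the sign conventions of §\ref{conv}, by the defect $\nu:=D_X\circ s-s\circ D_{X/k^\times}$, which automatically lands in the sub-double-complex.

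The heart of the argument is the computation of $\nu$, which splits into a vertical part $d_X\circ s-s\circ d_{X/k^\times}$ and a horizontal part $\eta_X\circ s-s\circ\eta$. The vertical part vanishes: a global section $\omega_v$ of $\omega^1_{X_v/k^\times}$ is, near each node $X_e$, of the form $a\,d\log x$ with $a$ regular, so $d_X(a\,d\log x)=a_x\,x\,d\log x\wedge d\log x=0$ (and likewise $0$ at smooth points), while $d_{X/k^\times}\omega_v=0$ for degree reasons and both terms agree on functions. The residue is produced entirely by the horizontal part, i.e.\ by the failure of $s$ to commute with restriction to the nodes. Here I use that $X=\mathfrak X_P$ is the reduced simple normal crossing fiber of a semistable family, so the integer $m$ in the charts of §\ref{logcurve} equals $1$ and the local relation reads $d\log x+d\log y=d\log t$. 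Since $(s\omega_v)|_{X_e}=\Res_{X_e}(\omega_v)\,d\log x$ and $(s\omega_w)|_{X_e}=\Res_{X_e}(\omega_w)\,d\log y$, substituting $d\log y=d\log t-d\log x$ in $\omega^1_{X_e/k}$ yields
\[
\eta_X(s\omega_v,s\omega_w)_e-s\bigl(\eta(\omega_v,\omega_w)_e\bigr)=-\Res_{X_e}(\omega_w)\,d\log t .
\]

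Finally I would evaluate $\nu$ on a hypercocycle $((\omega_v)_v,(c_e)_e)$ representing a class of $\HH^1_{\log}(X/k^\times)$ as in §\ref{mon}. The cocycle condition is $\Res_{X_e}(\omega_v)+\Res_{X_e}(\omega_w)=0$, so the displayed horizontal defect becomes $\Res_{X_e}(\omega_v)\,d\log t$ while the vertex component is $0$. Under $d\log t\mapsto 1$ and the degree shift identifying the sub-double-complex with $\Tot(\mathcal A_{X/k^\times})$, this is precisely the class $[(0,(\Res_{X_e}(\omega_v))_{e=[v,w]})]$, that is $\tilde N([\omega])$, whence $N=\tilde N$. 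I expect the main obstacle to be the bookkeeping of this last identification — tracking signs through the total complexes and making precise that $\partial$ is detected by the horizontal (Čech-type) defect rather than by the de Rham differential — together with the local coefficient computation at the nodes, where the semistability hypothesis $m=1$ and the log relation $d\log x+d\log y=d\log t$ are decisive.
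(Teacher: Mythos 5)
Your proposal is correct and follows essentially the same route as the paper: reduce \(N\) to the connecting morphism of (\ref{sesfib}) via Proposition \ref{stenprop}, resolve the absolute sequence by the double complex \(\mathcal A_{X/k}\), and extract the connecting morphism through a degree-wise splitting, with the residue arising precisely from the relation \(d\log x+d\log y=d\log t\) at the nodes. The paper packages the final identification \(\partial=\tilde{\nu}\) as an equality of roofs through the mapping cone \(\Cone(\varphi)\), choosing the section \(\sigma\) so that \((\sigma,(-1)^i\tilde{\nu})\) is a cochain map on the nose rather than only on cocycles, but this is the same computation as your defect \(D_X\circ s-s\circ D_{X/k^\times}\).
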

Using the quasi isomorphism
\[
\rho:\omega^\bullet_{X/k^\times}\longrightarrow\Tot(\mathcal A_{X/k^\times}),
\]
defined in (\ref{qisotot}), we will reduce the proof to a comparison of endomorphisms on \(\Tot(\mathcal A_{X/k^\times})\) in \(D^b(X)\). For this we take an absolute version of the double complex \(\mathcal A_{X/k^{\times}}\), namely
\[
\mathcal A_{X/k}=
\begin{tikzcd}
\bigoplus\limits_{v\in\mathscr V}i^{\circ}_{v*}\omega^2_{X_v/k} \arrow[r,"\eta^2"] &\bigoplus\limits_{e\in\mathscr E}i^{\circ}_{e*}\omega^2_{X_e}\\
\bigoplus\limits_{v\in\mathscr V}i^{\circ}_{v*}\omega^1_{X_v/k} \arrow[r,"\eta^1"] \arrow[u,"d=0"]&\bigoplus\limits_{e\in\mathscr E}i^{\circ}_{e*}\omega^1_{X_e}\arrow[u,"d=0"]\\
\bigoplus\limits_{v\in\mathscr V}i^{\circ}_{v*}\mathcal O_{X_v} \arrow[r,"\eta^0"]\arrow[u,"d"] &\bigoplus\limits_{e\in\mathscr E}i^{\circ}_{e*}\mathcal O_{X_e}\arrow[u,"d=0"]\nospaceperiod
\end{tikzcd}
\]
One can show, as in lemma \ref{mv}, that the obvious map \(\omega_{X/k}^\bullet\to\Tot(\mathcal A_{X/k})\) is a quasi-isomorphism.

Using the computations of log differentials in §\ref{logcurve}, one shows that we have short exact sequences analogue to (\ref{sesfib})
\begin{align*}
0\longrightarrow f^* \omega^1_{k^{\times}}\otimes_{\mathcal O_{X}}i^{\circ}_{v*}\omega^{\bullet-1}_{X_v/k^{\times}}\overset{\wedge}{\longrightarrow}i^{\circ}_{v*}\omega^{\bullet}_{X_v/k}\longrightarrow i^{\circ}_{v*}\omega^{\bullet}_{X_v/k^{\times}}\longrightarrow 0, \\
0\longrightarrow f^* \omega^1_{k^{\times}}\otimes_{\mathcal O_{X}}i^{\circ}_{e*}\omega^{\bullet-1}_{X_e/k^{\times}}\overset{\wedge}{\longrightarrow}i^{\circ}_{e*}\omega^{\bullet}_{X_e/k}\longrightarrow i^{\circ}_{e*}\omega^{\bullet}_{X_e/k^{\times}}\longrightarrow 0,
\end{align*}
for each vertex \(v\) in \(\mathscr V\) and edge \(e\) in \(\mathscr E\). Putting those together via direct sum, one obtains a short exact sequence
\begin{equation}\label{sestot}
0\longrightarrow\mathcal \Tot(A_{X/k^\times})^{\bullet-1}\overset{\varphi}{\longrightarrow}\Tot(A_{X/k})\overset{\psi}{\longrightarrow} \Tot(A_{X/k^\times})\longrightarrow 0.
\end{equation}
This in turn gives a connecting morphism \(\Tot(A_{X/k^\times})\to\Tot(A_{X/k^\times})^{\bullet-1}[1]\) in the derived category of bounded complexes of abelian sheaves \(D^b(X)\). After alternating the signs to make the differentials agree, i.e. composing with \((-1)^i\id:\Tot(A_{X/k^\times})^{\bullet-1}[1]\to\Tot(A_{X/k^\times})^{\bullet}\), we get
\begin{equation}\label{nu}
\nu:\Tot(A_{X/k^\times})\longrightarrow\Tot(A_{X/k^\times}).
\end{equation}
We want to point out that the morphism $\psi$ has an index-wise section $\sigma$ (is not a cochain map) described locally as follows. First we consider the case of an \'etale open around a double point \(X_e\in X_v\cap X_w\). On this \'etale open, \(\Tot^0(\mathcal A_{X/k^\times})\simeq i^{\circ}_{v*}\mathcal  O_{X_v}\oplus i^{\circ}_{w*}\mathcal  O_{X_w}\) so we put
\begin{align*}
\sigma^0:\Tot^0(\mathcal A_{X/k^\times})&\longrightarrow \Tot^0(\mathcal A_{X/k})\\
\left(f_v,f_w\right)&\longmapsto\left(f_v,f_w\right).
\end{align*}
In degree \(1\), we have \(\Tot^1(\mathcal A_{X/k^\times})\simeq i^{\circ}_{v*}\omega^1_{X_v/k^\times}\oplus i^{\circ}_{w*}\omega^1_{X_w/k^\times}\oplus i^{\circ}_{e*} \mathcal O_{X_e}\). Remember, from §\ref{logcurve}, that a section of \(\omega^1_{X_v/k^\times}\) is given by a class \([f_vd\log x+g_vd\log y]\) modulo the relation \(d\log x+d\log y=0\). So, a local section of \(\Tot^1(\mathcal A_{X/k^\times})\) has a representative of the form \(([f_vd\log x],[g_w d\log y], a_e)\). We define
\begin{align*}
\sigma^1:\Tot^1(\mathcal A_{X/k^\times})&\longrightarrow \Tot^1(\mathcal A_{X/k})\\
([f_vd\log x],[g_w d\log y], a_e)&\longmapsto (f_vd\log x, g_w d\log y, a_e).
\end{align*}
Finally, in degree \(2\), a section of \(\Tot^2(\mathcal A_{X/k^\times})\) is a section of \(\omega^1_{X_e/k^\times}\) (again expressed as an equivalence class) while a section of \(\Tot^2(\mathcal A_{X/k})\) is a section of \(i^{\circ}_{v*}\omega^2_{X_v/k}\oplus i^{\circ}_{w*}\omega^2_{X_w/k}\oplus i^\circ_{e*}\omega^1_{X_e/k}\). We declare
\begin{align*}
\sigma^2:\Tot^2(\mathcal A_{X/k^\times})&\longrightarrow \Tot^2(\mathcal A_{X/k})\\
[a_ed\log x + b_ed\log y]&\longmapsto (0,0,(b_e-a_e)d\log y).
\end{align*}
The map \(\sigma\) around smooth points is similar:
\begin{align*}
\sigma^0:i^{\circ}_{v*}\mathcal  O_{X_v}&\longrightarrow i^{\circ}_{v*}\mathcal  O_{X_v}\\
f_v&\longmapsto f_v\\
\sigma^1:i^{\circ}_{v*}\omega^1_{X_v/k^\times}&\longrightarrow i^{\circ}_{v*}\omega^1_{X_v/k}\\
[f_vdz+g_v d\log 0]&\longmapsto f_vdz\\
\sigma^2:0&\longrightarrow i^{\circ}_v\omega^2_{X_v/k}.
\end{align*}
\begin{lemma}\label{compconnmorph}
The connecting morphisms \(\partial\), defined in (\ref{connmorphpartial}), and \(\nu\) defined in (\ref{nu}), coincide via the quasi isomorphism \(\rho\), that is
\[
\partial=\rho^{-1}\circ\nu\circ\rho.
\]
\end{lemma}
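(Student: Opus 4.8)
The plan is to read the identity off from the naturality of connecting morphisms, by exhibiting the restriction maps as a morphism of short exact sequences from $(\ref{sesfib})$ to $(\ref{sestot})$. Recall that $(\ref{sestot})$ is, by construction, the totalization of the direct sum over $v\in\mathscr V$ and $e\in\mathscr E$ of the fundamental sequences of log differentials for $f_v\colon X_v\to k^\times$ and $f_e\colon X_e\to k^\times$; since $\mathcal M_{X_v}$ and $\mathcal M_{X_e}$ are the pullbacks of $\mathcal M_X$ (so that $i^\circ_v,i^\circ_e$ are strict and $f\circ i_v=f_v$, $f\circ i_e=f_e$), each of these is the restriction of $(\ref{sesfib})$ along the corresponding closed immersion, cf. \cite[Proposition 3.12]{Kato89}. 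Concretely I would record the commutative ladder
\[
\begin{tikzcd}[column sep=small]
0 \arrow[r] & f^*\omega^1_{k^\times}\otimes_{\mathcal O_X}\omega^{\bullet-1}_{X/k^\times} \arrow[r,"\wedge"] \arrow[d] & \omega^\bullet_X \arrow[r] \arrow[d,"\tilde\rho"] & \omega^\bullet_{X/k^\times} \arrow[r] \arrow[d,"\rho"] & 0 \\
0 \arrow[r] & \Tot(\mathcal A_{X/k^\times})^{\bullet-1} \arrow[r,"\varphi"] & \Tot(\mathcal A_{X/k}) \arrow[r,"\psi"] & \Tot(\mathcal A_{X/k^\times}) \arrow[r] & 0
\end{tikzcd}
\]
where $\rho$ is the quasi-isomorphism $(\ref{qisotot})$, the map $\tilde\rho\colon\omega^\bullet_X\to\Tot(\mathcal A_{X/k})$ is its absolute analogue (a quasi-isomorphism, proved as in Lemma \ref{mv}), and the left-hand vertical arrow is $\rho[-1]$ after the trivialization $f^*\omega^1_{k^\times}\simeq\mathcal O_X$.

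Next I would check that the two squares commute. This is the statement that restriction to components and intersection points is functorial: it intertwines the projection $\omega^\bullet_X\to\omega^\bullet_{X/k^\times}$ with $\psi$ (right square) and wedging by the image of $d\log t$ with $\varphi$ (left square). The assertion is \'etale-local, and is trivial around a smooth point, where the relevant log structure disappears. The main obstacle is the local check for the left square around a double point $X_e\in X_v\cap X_w$: the quotient presentation $\omega^1_{X/k^\times}=\bigl(\mathcal O_X\,d\log x+\mathcal O_X\,d\log y\bigr)/\mathcal O_X(d\log x+d\log y)$, together with the $\omega^2$-terms that appear only in the absolute complexes, makes the bookkeeping of representatives delicate. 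This is precisely what the index-wise section $\sigma$ of $\psi$ recorded before the statement is for: using $\sigma$ one computes $\nu$ at the cochain level as $\varphi^{-1}(d\sigma-\sigma d)$, up to the sign twist $(-1)^i\id$, and comparing this with the analogous local splitting of $(\ref{sesfib})$ turns the commutativity into the finite verification in the presentations of §\ref{logcurve}.

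With the ladder in hand, naturality of the connecting morphism in $D^b(X)$ produces a commuting square relating the raw connecting maps of $(\ref{sesfib})$ and $(\ref{sestot})$. After identifying the target $f^*\omega^1_{k^\times}\otimes_{\mathcal O_X}\omega^{\bullet-1}_{X/k^\times}[1]$ with $\omega^\bullet_{X/k^\times}$ via $f^*\omega^1_{k^\times}\simeq\mathcal O_X$, and observing that $\partial$ and $\nu$ carry the \emph{same} sign twist $(-1)^i\id$ (cf. $(\ref{connmorphpartial})$ and $(\ref{nu})$), which is itself natural, the square becomes $\nu\circ\rho=\rho\circ\partial$ in $D^b(X)$. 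Since $\rho$ is invertible there by $(\ref{qisotot})$, I conclude $\partial=\rho^{-1}\circ\nu\circ\rho$, as claimed.
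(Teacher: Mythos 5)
Your argument is exactly the paper's: the published proof consists of the single remark that one ``only needs to verify that $\rho$ is compatible with the formation of the short exact sequences (\ref{sesfib}) and (\ref{sestot}),'' i.e.\ the ladder you display, after which naturality of the connecting morphism gives the claim. You have merely spelled out the local verification (strictness of $i^\circ_v$, $i^\circ_e$, the check around double points, the matching sign twists) that the paper dismisses as straightforward, and this expansion is correct.
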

\begin{proof}
One only needs to verify that \(\rho\) is compatible with the formation of the short exact sequences (\ref{sesfib}) and (\ref{sestot}), but this is straightforward. 
\end{proof}
We want to show that also \(\tilde{N}\) comes from some morphism on \(\Tot(\mathcal A_{X/k^\times})\). So we consider the composition
\begin{equation}\label{conntot}
\Tot(\mathcal A_{X/k^\times})\longrightarrow\bigoplus_{v\in \mathscr V}i^{\circ}_v\omega_{X_v}^\bullet \overset{\eta_A}{\longrightarrow}\bigoplus_{e\in \mathscr E}i^{\circ}_e\omega_{X_e}^\bullet \overset{\Res}{\longrightarrow}\bigoplus_{e\in \mathscr E}i^{\circ}_e\omega_{X_e}^{\bullet-1}\longrightarrow \Tot(\mathcal A_{X/k^\times}),
\end{equation}
where the unlabeled morphisms are, respectively, the projection to the first column of the double complex and the inclusion of the second column of the double complex. The morphism \(\eta_A\) was defined in (\ref{etaA}). We will denote the composition of the morphisms in (\ref{conntot}) with \(\tilde{\nu}\).\\
Then \(\tilde{\nu}\) computes the combinatorial monodromy \(\tilde{N}\) in the following sense.
\begin{lemma}\label{numon}
The endomorphism induced by \(\rho^{-1}\circ \tilde{\nu} \circ \rho\) in hypercohomology and \(\tilde{N}\) coincide.
\end{lemma}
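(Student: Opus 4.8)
The plan is to verify the identity on cocycles, using that \(\tilde\nu\) is an honest morphism of complexes rather than merely a derived-category map. In the double complex \(\mathcal A_{X/k^\times}\) the only horizontal differential \(\eta=\eta_A-\eta_B\) goes from the vertex column to the edge column, so the edge column \(\bigoplus_{e}i^\circ_{e*}[\mathcal O_{X_e}\to\omega^1_{X_e/k^\times}]\) is a subcomplex of \(\Tot(\mathcal A_{X/k^\times})\) and the vertex column is the corresponding quotient complex. Hence the first arrow in (\ref{conntot}) (projection onto the vertex column) and the last arrow (inclusion of the edge column) are chain maps; the restriction \(\eta_A\) commutes with \(d\); and the residue \(\Res\colon\omega^\bullet_{X_e/k^\times}\to\omega^{\bullet-1}_{X_e/k^\times}\) is a chain map because all differentials on the log points \(X_e\) vanish. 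Thus \(\tilde\nu\) is a degree-preserving endomorphism of \(\Tot(\mathcal A_{X/k^\times})\); it induces an endomorphism of \(\HHH^1(X,\Tot(\mathcal A_{X/k^\times}))\), which transported by the quasi-isomorphism \(\rho\) yields \(\rho^{-1}\circ\tilde\nu\circ\rho\) on \(\HH^1_{\log}(X/k^\times)\).

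First I would compute \(\tilde\nu\) on a total-degree-one cocycle \(((\tilde\omega_v)_{v},(f_e)_{e})\), where \(\tilde\omega_v\) is a local section of \(\omega^1_{X_v/k^\times}\) and \(f_e\in\mathcal O_{X_e}=k\). Running through (\ref{conntot}): the projection discards \((f_e)_e\); the restriction \(\eta_A\) produces \((\tilde\omega_v|_{X_e})_{e=[v,w]}\) in the edge column (recall \(A(e)=v\) for \(e=[v,w]\)); the residue yields the scalars \((\Res_{X_e}(\tilde\omega_v|_{X_e}))_{e=[v,w]}\); and the inclusion deposits them in the \(\bigoplus_{e}i^\circ_{e*}\mathcal O_{X_e}\) summand of total degree one. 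Therefore
\[
\tilde\nu\big((\tilde\omega_v)_{v},(f_e)_{e}\big)=\big(0,\,(\Res_{X_e}(\tilde\omega_v|_{X_e}))_{e=[v,w]}\big),
\]
which already has the shape of the representative defining \(\tilde N\) in §\ref{mon}.

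The one genuine point is to match this log-residue with the Laurent-series residue of §\ref{Laurent} under the comparison of Lemma \ref{expl}. Using \(\beta'\) together with (\ref{diviso}) to pass from \(\mathcal A_{X/k^\times}\) to \(\mathcal B_{X/k^\times}\) — an identification that is the identity on the edge column — I would set \(\omega_v=\beta'(\tilde\omega_v)\in\HH^0(U_v,\Omega^1_{U_v})\) and check \(\Res_{X_e}(\tilde\omega_v|_{X_e})=\Res_{X_e}(\omega_v)\) for each \(e=[v,w]\). Writing \(\tilde\omega_v=[f\,d\log x+g\,d\log y]\) near \(X_e\), formula (\ref{diviso}) gives \(\omega_v=(f-g)\,d\log x\), whose Laurent-series residue at \(X_e\) is \((f-g)|_{X_e}\); on the other hand \(\tilde\omega_v|_{X_e}=[f|_{X_e}\,d\log x+g|_{X_e}\,d\log y]\), and using the relation \(d\log x+d\log y=0\) in \(\omega^1_{X_e/k^\times}\) its residue is again \((f-g)|_{X_e}\). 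Hence the displayed formula for \(\tilde\nu\) reads \((0,(\Res_{X_e}(\omega_v))_{e=[v,w]})\), which is exactly the representative of \(\tilde N([\omega])\).

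I expect the only delicate part to be this residue bookkeeping: keeping track of the log relation \(d\log x+d\log y=0\) on the nodes and of the sign and normalisation of the edge-residue, so that the two a priori different residues are identified. Once this is done, the equality \(\rho^{-1}\circ\tilde\nu\circ\rho=\tilde N\) holds at the level of cocycles and hence on \(\HH^1_{\log}(X/k^\times)\).
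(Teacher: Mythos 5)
Your argument is correct and follows the same route as the paper, whose proof of this lemma is a one-line appeal to the global-section description of \(\HH^1_{\log}(X/k^{\times})\) from Lemma \ref{expl}; you have simply filled in the details that the paper leaves implicit, in particular the transport of \(\tilde\nu\) through \(\alpha:\mathcal A_{X/k^\times}\to\mathcal B_{X/k^\times}\) and the matching of the log residue \([f\,d\log x+g\,d\log y]\mapsto (f-g)|_{X_e}\) on the nodes with the Laurent-series residue of \((f-g)\,d\log x\) via (\ref{diviso}). The sign and orientation bookkeeping (using \(\eta_A\), hence the component \(X_{A(e)}\) and its coordinate \(x\)) is consistent with the definition of \(\tilde N\).
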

\begin{proof}
This comes directly from the description of \(\HH^1_{\log}(X/k^{\times})\) in terms of global sections in Lemma \ref{expl}.
\end{proof}
To summarize, we have seen in Lemma \ref{numon} that
\[
\tilde{N}=\R^1f_*(\rho^{-1}\circ\tilde{\nu}\circ\rho),
\]
while Proposition \ref{stenprop} and Lemma \ref{compconnmorph} yield
\[
N=\R^1f_*(\partial)=\R^1f_*(\rho^{-1}\circ\nu\circ\rho).
\]
We then deduce Theorem \ref{moncomp} showing that \(\tilde{\nu}=\nu\).
\begin{lemma}
The morphisms \(\tilde{\nu}:\Tot(A_{X/k^\times})\to\Tot(A_{X/k^\times})\) and \(\nu:\Tot(A_{X/k^\times})\to\Tot(A_{X/k^\times})\), described above, coincide in \(D^b(X)\).
\end{lemma}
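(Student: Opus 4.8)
The plan is to produce an explicit cochain-level representative of the connecting morphism $\nu$ and then compare it directly with the composite $\tilde{\nu}$ of (\ref{conntot}). Recall the standard mechanism: a short exact sequence of complexes equipped with an index-wise (merely graded) section of its surjection has connecting morphism represented by the honest chain map obtained by lifting along the section, applying the differential, and dividing by the injection. Concretely, for (\ref{sestot}) the expression $d\sigma-\sigma d$, with $d$ the total differentials of $\Tot(\mathcal A_{X/k})$ and $\Tot(\mathcal A_{X/k^\times})$, is killed by $\psi$ (since $\psi$ is a chain map and $\psi\sigma=\id$), hence factors uniquely through the monomorphism $\varphi$; the resulting chain map $\varphi^{-1}\circ(d\sigma-\sigma d)$, twisted by the sign $(-1)^{\bullet}$ already built into (\ref{nu}), represents $\nu$. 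Since both $\nu$ and $\tilde{\nu}$ are now described by local formulas and all sheaves in sight are determined by their \'etale-local sections, it suffices to verify $\nu=\tilde{\nu}$ \'etale-locally, and degree by degree, around the two model situations of \S\ref{logcurve}: a smooth point and a double point.

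Around a smooth point there is nothing to check: the base direction $d\log 0$ is already killed in $\omega^1_{X_v/k^\times}\simeq\Omega^1_{X/k}$, so $\sigma$ commutes with the differential and $\nu$ vanishes, while $\tilde{\nu}$ vanishes because the relevant residues are residues of regular forms. The entire content is therefore concentrated around a double point $X_e\in X_v\cap X_w$, where one computes $d\sigma-\sigma d$ in degrees $0$, $1$ and $2$ from $\sigma^0,\sigma^1,\sigma^2$ and the chart relation $m\,d\log 0=d\log x+d\log y$ (with $m$ invertible). In each degree the discrepancy lies, as it must, in the image of $\varphi=(-)\wedge d\log 0$; the task is to read off its coefficient after dividing by $d\log 0$ and to identify it with the residue map $\Res_{X_e}$ applied to the restriction $\eta_A$ of the vertex form to its \emph{starting} edge. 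This is exactly the composite (projection to the first column, then $\eta_A$, then $\Res$, then inclusion of the second column) defining $\tilde{\nu}$ in (\ref{conntot}).

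The crux is degree $1$, which carries the monodromy on $\HH^1_{\log}(X/k^\times)$. A local section $[f_v\,d\log x]$ of $\omega^1_{X_v/k^\times}$ is lifted by $\sigma^1$ to $f_v\,d\log x\in\omega^1_{X_v/k}$. Its vertical (de Rham) differential $df_v\wedge d\log x$ vanishes, since $df_v$ is a multiple of $d\log x$ on the curve $X_v$; hence the failure of $\sigma$ to be a chain map is purely horizontal, coming from the restriction $\eta_A$ to the log point $X_e$. Restricting the log form $f_v\,d\log x$ to $X_e$ is exactly what extracts its residue, and after clearing the relation $m\,d\log 0=d\log x+d\log y$ (the contribution of the component $X_w$ entering through $\eta_B$ and $\sigma^2$) the $d\log 0$-coefficient of the discrepancy equals $\Res_{X_e}(f_v\,d\log x)$, placed in the edge slot $\bigoplus_{e}\mathcal O_{X_e}$ with zero vertex part. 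This is precisely the value defining $\tilde{N}$, so $\nu$ and $\tilde{\nu}$ agree in degree $1$; the checks in degrees $0$ and $2$ are immediate, both maps vanishing there after the degree shift.

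The main obstacle is exactly this local bookkeeping at the double point: one must manipulate the three log generators $d\log x,d\log y,d\log 0$ subject to $m\,d\log 0=d\log x+d\log y$, respect the ``symbol'' status of the non-coordinate variable on each component (cf. \S\ref{logcurve}), and confirm that the discrepancy genuinely lands in $\IIm\varphi$ with the residue appearing at the starting vertex via $\eta_A$ and with the sign matching the $(-1)^{\bullet}$ twists of both $\nu$ and the connecting morphism against the sign of $\Res$ in (\ref{conntot}). A subsidiary point, which the same computation settles, is that $\tilde{\nu}$ is itself a chain map; indeed one finds that $\nu$ and $\tilde{\nu}$ coincide already as morphisms of complexes, so their equality in $D^b(X)$ is immediate.
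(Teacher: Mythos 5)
Your proposal is correct and takes essentially the same route as the paper: both use the index-wise section \(\sigma\) of \(\psi\) to produce an explicit cochain representative of the connecting morphism \(\nu\) and then verify, \'etale-locally around smooth and double points and degree by degree (with the whole content sitting in degree \(1\) at a double point, where \(d\sigma-\sigma d\) lands in \(\IIm\varphi\) with \(d\log 0\)-coefficient \(\Res_{X_e}(\omega_v)\)), that this representative is \(\tilde{\nu}\). The paper merely packages the identical computation as the statement that \(\mu=(\sigma,(-1)^i\tilde{\nu})\) is a cochain map into \(\Cone(\varphi)\), which makes the two roofs representing \(\nu\) and \(\tilde{\nu}\) in \(D^b(X)\) coincide.
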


\begin{proof}
We will consider the mapping cone \(\Cone (\varphi)\). For an index \(i\), we have \(\Cone(\varphi)^i=\Tot(A_{X/k})^i\oplus\Tot(A_{X/k^\times})^i\) and differential \(d_{\Cone(\varphi)}^i(b,a)=(d^i (b)+\varphi(a), -d^i (a))\). Then the morphism \(\nu\) is described by the roof, called (left)-``fraction'' in \cite[§10.5.3]{Weibel94},
\[
\nu:
\begin{tikzcd}
&\Cone(\varphi) \arrow[dl,"s"'] \arrow[dr,"\delta"] &\\
\Tot(A_{X/k^\times}) & &\Tot(A_{X/k^\times})\nospacecomma
\end{tikzcd}
\]
where \(s^i(b,a)=\psi^i(b)\) and \(\delta^i(b,a)=(-1)^ia\), for any index \(i\). On the other hand, the morphism \(\tilde{\nu}\) is given by
\[
\tilde{\nu}:
\begin{tikzcd}
&\Tot(A_{X/k^\times}) \arrow[dl,"\id"'] \arrow[dr,"\tilde{\nu}"] &\\
\Tot(A_{X/k^\times}) & &\Tot(A_{X/k^\times})\nospacecomma
\end{tikzcd}
\]
The two roofs coincide once we exhibit a commutative diagram
\[
\begin{tikzcd}
&&\Xi \arrow[dl,"\lambda"']\arrow[dr,"\mu"]&&\\
&\Tot(A_{X/k^\times}) \arrow[dl,"\id"']\arrow[drrr,"\tilde{\nu}"]&&\Cone(\varphi)\arrow[dlll,"s"'] \arrow[dr,"\delta"] \\
\Tot(A_{X/k^\times})&&&&\Tot(A_{X/k^\times})\nospacecomma
\end{tikzcd}
\]
where \(\lambda\) is a quasi isomorphism of complexes and \(\mu\) is a morphism of complexes in the homotopical category. We will put \(\Xi=\Tot(\mathcal A_{X/k^\times})\), \(\lambda=\id\) and \(\mu=(\sigma,(-1)^i\tilde{\nu})\). Then the commutativity of the diagram is obvious once we show that \(\mu=(\sigma,(-1)^i\tilde{\nu})\) is a cochain map. The statement being local, we will verify it (\'etale locally) first around a double point. For \(i=0\), we have
\[
d_{\Cone(\varphi)}^0(\mu(f_v,f_w))=d_{\Cone(\varphi)}^0((f_v,f_w),0)=((df_v,df_w,f_v|_{X_e}-f_w|_{X_e}),0)
\]
and
\[
\mu(d^0(f_v,f_w))=\mu((df_v,df_w,f_v|_{X_e}-f_w|_{X_e}),0)=((df_v,df_w,f_v|_{X_e}-f_w|_{X_e}),0).
\]
For \(i=1\), we have
\begin{align*}
&d_{\Cone(\varphi)}^1\left(\mu([f_v d\log x], [g_w d\log y], a_e)\right)=d_{\Cone(\varphi)}^1((f_vd\log x,g_w d\log y,a_e),-f_v|_{X_e})\\
&=\left(\left(0,0,f_v|_{X_e}d\log x - g_w|_{X_e}d\log y\right)-\left(0,0,f_v|_{X_e}(d\log x +d\log y)\right),0\right)\\
&=((0,0,-(f_v|_{X_e}+g_w|_{X_e})d\log y),0)
\end{align*}
and
\begin{align*}
\mu(d^1([f_v d\log x], [g_w d\log y], a_e))&=\mu([f_v|_{X_e}d\log x - g_w|_{X_e}d\log y])\\
&=((0,0,-(f_v|_{X_e}+g_w|_{X_e})d\log y),0).
\end{align*}
For \(i=2\), we have
\begin{align*}
&d_{\Cone(\varphi)}^2(\mu([f_ed\log x]))=d_{\Cone(\varphi)}^2(-f_e d\log y,0)=(0,0)
\end{align*}
in agreement with the fact that \(d^2=0\) on \(\Tot(\mathcal A_{X/k^\times})^2\). The same verification around smooth points is direct.
\end{proof}

\bibliographystyle{abbrv}
\bibliography{bibliography}

\end{document}